\newtheorem*{conj*}{Conjecture}
\newtheorem*{ack}{Acknowledgements}
\newtheorem*{thm*}{Theorem}
\newtheorem{prop}{Proposition}[section]
\newtheorem{LM}{Lemma}[section]
\newtheorem{thm}{Theorem}[section]
\newtheoremstyle{pourlesremarques}{\topsep}{\topsep}{\normalfont}{}{\bfseries}{.}{ }{}
\theoremstyle{pourlesremarques}
\newtheorem{rem}{Remark}[section]
\newtheorem*{rem*}{Remark}
\newtheoremstyle{pourlesexemples}{\topsep}{\topsep}{\normalfont}{}{\bfseries}{.}{ }{}
\theoremstyle{pourlesexemples}
\newcommand{\w}{\varpi}
\renewcommand{\d}{\delta}
\newcommand{\e}{\epsilon}
\renewcommand{\l}{\lambda}
\newcommand{\C}{\mathbb{C}}
\newcommand{\N}{\mathbb{N}}
\newcommand{\M}{\mathcal{M}}
\newcommand{\Z}{\mathbb{Z}}
\newcommand{\1}{\mathbf{1}}
\newcommand{\D}{\Delta}
\newcommand{\B}{P_\emptyset}
\title {\textbf{Distinction of the Steinberg representation for inner forms of $GL(n)$}}
\author{Nadir MATRINGE\footnote{Nadir Matringe, Universit\'e de Poitiers, Laboratoire de Math\'ematiques et Applications,
T\'el\'eport 2 - BP 30179, Boulevard Marie et Pierre Curie, 86962, Futuroscope Chasseneuil Cedex. Email: Nadir.Matringe@math.univ-poitiers.fr}}
\begin{document}
\maketitle

\begin{abstract}
Let $F$ be a non archimedean local field of characteristic not $2$. Let $D$ be a division algebra of dimension 
$d^2$ over its center $F$, and $E$ a quadratic extension of $F$. If $m$ is a positive integer, to a character $\chi$ of $E^*$, one can attach the Steinberg representation 
$St(\chi)$ of $G=GL(m,D\otimes_F E)$. Let $H$ be the group $GL(m,D)$, we prove that 
$St(\chi)$ is $H$-distinguished if and only if $\chi_{|F^*}$ is the quadratic character $\eta_{E/F}^{md-1}$, where $\eta_{E/F}$ is the character of $F^*$ with kernel the norms of $E^*$. We also get multiplicity one for the space of invariant linear forms.
\end{abstract}

\section{Introduction}

Let $F$ be a non archimedean local field of characteristic not $2$. Let $D$ be a finite dimensional 
division algebra with center $F$, $E$ a quadratic extension of $F$, and $m$ a positive integer. Call 
$\eta_{E/F}$ the character of $F^*$ with kernel the norms of $E^*$. We denote by $H$ 
the group $GL(m,D)$, and by $G$ the group $GL(m,D\otimes_F E)$. We will denote by 
$N_{rd,H}$ the reduced norm of $H$. If the index (the square root of the dimension) $d$ of $D$ over $F$ is even, then there is a division algebra $\D$ with center $E$ of index $d/2$, such that $G=GL(2m,\D)$. If $d$ is odd, then 
$D \otimes_F E$ remains a division algebra $D_E$. Formulated in a different manner, 
our main result says.

\begin{thm*}
Let $St(\1)$ be the Steinberg representation of $G$, and $\chi$ be a character of $H$. Then 
$Hom_H(St(\1),\chi)$ is of dimension zero, except in the case $\chi=\eta_{E/F}^{md-1}\circ N_{rd,H}$, in which case it has dimension $1$.
\end{thm*}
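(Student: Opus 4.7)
The plan is to exploit the standard presentation of the Steinberg representation as an Euler--Poincar\'e sum of parabolic inductions. Fix a minimal parabolic $B$ of $G$, let $\Delta$ be the associated set of simple roots, and for $\Theta\subseteq\Delta$ let $P_\Theta$ be the corresponding standard parabolic of $G$. In the Grothendieck group of finite length smooth representations of $G$ one has
$$[St(\1)]\;=\;\sum_{\Theta\subseteq\Delta}(-1)^{|\Delta\setminus\Theta|}\,[Ind_{P_\Theta}^G\1],$$
coming from an explicit resolution of $St(\1)$. After applying $Hom_H(-,\chi)$ and controlling the higher $Ext_H^i$ along this resolution (by cuspidal support comparison and Bernstein's second adjunction), the problem reduces to the identity
$$\dim Hom_H(St(\1),\chi)\;=\;\sum_{\Theta}(-1)^{|\Delta\setminus\Theta|}\dim Hom_H(Ind_{P_\Theta}^G\1,\chi).$$

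Each term on the right I would compute via Mackey theory. The double coset space $H\backslash G/P_\Theta$ is finite and admits a combinatorial description by partial flags on the $D\otimes_F E$-module $(D\otimes_F E)^m$ compatible with the $D$-structure; correspondingly, the restriction of $Ind_{P_\Theta}^G\1$ to $H$ is filtered by these double cosets, with graded pieces inductions of the trivial character from the stabilizers $H\cap gP_\Theta g^{-1}$. Modulo unipotent radicals, each stabilizer has Levi part equal to a product of smaller groups of the same inner form type as $H$ (over $D$ if $d$ is odd, or over the central simple $E$-algebra $\D$ if $d$ is even). Frobenius reciprocity then reduces each orbit contribution to an invariant-form computation on the trivial character of such a product, twisted by the modulus character ratio $\delta_{P_\Theta}^{1/2}/\delta_{P_\Theta\cap H}$; this ratio, restricted to $H$, is a power of $\eta_{E/F}$ up to a factor involving $N_{rd,H}$.

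Assembling these ingredients, the character $\chi=\eta_{E/F}^{md-1}\circ N_{rd,H}$ should be forced out as the unique one for which a nonzero invariant form can survive in the alternating sum, with multiplicity one coming from the fact that after cancellations only a single orbit on the full flag variety $G/B$ retains a one-dimensional contribution. The main obstacle will be the precise combinatorial bookkeeping of the alternating sum---showing that all intermediate contributions cancel exactly, and tracking the modulus character ratios carefully enough to pin down the exponent $md-1$ of $\eta_{E/F}$. A secondary difficulty is the uniform handling of the two parities of $d$: in the even case $G=GL(2m,\D)$ is not a naive base change of $H$, so the parametrization of $H$-orbits on generalized flag varieties must be adapted and then matched with the odd-$d$ analysis to produce a single uniform character condition.
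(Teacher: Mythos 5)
Your plan takes a genuinely different route from the paper, and it contains a serious gap that prevents it from working as stated.

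The paper never passes to the Grothendieck group. It uses the presentation $St(\1)=ind_{\B}^G(\1)/\sum_P ind_P^G(\1)$ (sum over next-to-minimal standard parabolics), then runs Mackey theory and Frobenius reciprocity on $ind_{\B}^G(\1)$ alone to show that any invariant form must live on the open cell, giving both the constraint on $\chi$ and multiplicity at most one; it then produces an invariant form via Blanc--Delorme open-orbit theory and shows it kills every $ind_P^G(\1)$; and finally, for the remaining candidate $\chi$, it rules out distinction by an explicit period computation (Proposition \ref{m=2} reduces to a Godement--Jacquet zeta integral and gives $L(\1_{F^*},d(2s-1))/L(\eta_{E/F},2ds)$, which does not vanish at $s=0$). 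This last nonvanishing argument has no analogue in your plan.

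The gap in your plan is the step from the identity $[St(\1)]=\sum_\Theta(-1)^{|\Delta\setminus\Theta|}[Ind_{P_\Theta}^G\1]$ to the claimed formula $\dim Hom_H(St(\1),\chi)=\sum_\Theta(-1)^{|\Delta\setminus\Theta|}\dim Hom_H(Ind_{P_\Theta}^G\1,\chi)$. The Steinberg resolution is an exact complex of $G$-modules, but its terms are not $Hom_H(-,\chi)$-acyclic, so applying $Hom_H(-,\chi)$ only gives the Euler--Poincar\'e identity $\sum_i(-1)^i\dim Ext^i_H(St(\1),\chi)=\sum_\Theta(-1)^{|\Delta\setminus\Theta|}\sum_i(-1)^i\dim Ext^i_H(Ind_{P_\Theta}^G\1,\chi)$, and even that requires knowing the $Ext^i_H$ are finite-dimensional and vanish in high degree. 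To extract $\dim Hom_H$ you would need to prove vanishing of all higher $Ext^i_H$ for each term and for $St(\1)$ itself, which is a substantial open problem in this relative setting; "cuspidal support comparison and Bernstein's second adjunction" are statements about the category of $G$-representations and do not by themselves control $Ext^\bullet_H(\pi,\chi)$. Relatedly, you would also have to show that for the wrong $\chi$ the alternating sum of Hom-dimensions is exactly $0$ --- but the individual terms need not vanish, and the exact cancellation is precisely the content the paper establishes by the period integral computation, not by combinatorial bookkeeping. Without the Ext-vanishing and without a mechanism forcing that cancellation, the plan does not close.
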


In the context of $F$-quasi split groups, such a result has been conjectured by Prasad in \cite{P01}, and extended in \cite{P16} to all reductive groups, hence the statement above is a particular case of 
the conjectures in \cite{P16}. Notice that for general reductive groups, the definition of the character $\chi$ is involved, see Section 8 of \cite{P16}. In fact \cite{P16} provides 
general conjectures for the shape of Langlands parameters of distinguished representations and the dimension 
of the space of invariant linear forms in terms of those parameters. In particular, Remark 10 of [ibid] explains that $\chi$ does not depend (in a certain sense) on the inner class of the group, so it is not surprising that the statement above looks the same for $GL(md,F)$ and $GL(m,D)$.\\
For $GL(n,F)$, Prasad's conjecture on the Steinberg representation was proved in \cite{AR}, as a consequence of the equality of two forms of the Asai $L$-factor of discrete 
series representations, the proof of which relied on a local-global argument.\\ 
More recently, with a purely local proof, Broussous and Courtès proved Prasad's conjecture for $F$-split 
groups, when $F$ is of residual characteristic different from $2$ (see \cite{B14}, \cite{C15} and \cite{C}). Their method is very general, 
but already becomes very technical when $E/F$ is ramified. To give an idea of it, if $G$ is the $F$-split group in question, 
they use the realisation of $St(\1)$ as the space of harmonic functions on the chambers of the Bruhat-Tits 
building $X_E$ of $G(E)$, and construct an explicit 
linear form using this realisation. The geometry behind the problem of uniqueness of such a form is the understanding 
of the action of $G(F)$ on the chambers of $X_E$, which is hard.\\
In the case of inner forms of $GL(n)$, our method is also local and as naive as possible, we just use the definition 
of the Steinberg representation. The underlying geometry is much simpler, as it is 
amounts to understand the action of $H$ on flag varieties $G/P$, for $P$ a parabolic subgroup 
of $G$. Such an action is well understood thanks to \cite{HW} for example, but in our particular case, 
we use an adhoc method rather than a general one for determining the orbits of $H$ on  $G/P$. 
Maybe in counter-part to the method of Broussous and Courtès, the 
representation theory becomes a bit more involved, though not much as it is essentially Mackey theory and Frobenius reciprocity. 
We think that our method is quite general as well, and this kind of strategy has already been used in different 
contexts (see for instance \cite{FLO}, \cite{M14}, \cite{G}), we will summarise it now.\\
Let $\B$ be the minimal parabolic subgroup of $G$ of upper triangular matrices. Throughout the paper, we will use the notation 
$ind$ for un-normalised parabolic induction. The Steinberg representation $St(\1)$ of $G$ is by definition the quotient of $ind_{\B}^G(\1)$ by the sum of the 
representations $ind_P^G(\1)$, where we can take $P$ amongst the parabolic subgroups containing $\B$ as a proper 
subgroup, and minimal for this property. The existence of a nonzero $(H,\chi)$-equivariant linear form $L$ on $St(\1)$ implies 
that such a form descends from $ind_{\B}^G(\1)$, and Mackey theory together with Frobenius reciprocity shows that 
such a linear form must restrict non trivially to $\mathcal{C}_c^\infty(\B\backslash \B uH)\simeq ind_{\B\cap uHu^{-1}}^{uHu^{-1}}(\1)$, where $\B uH$ is the big (open) cell in 
$G$. Applying Frobenius reciprocity again, one gets that there is at most one (up to scaling) such linear form $L$. It also 
implies that there are only two possible choices $\chi_0$ and $\chi_1$ for $\chi$ (see Propositions \ref{2poss} and 
\ref{onedirection}). For one of these choices, say $\chi_0$, the 
representation $ind_{\B}^G(\1)$ is $(H,\chi_0)$-distinguished thanks to the results of Blanc and Delorme (\cite{BD}), and using Mackey theory and Frobenius reciprocity again, on sees that no representation $ind_P^G(\1)$ can be $(H,\chi_0)$-distinguished, hence 
$St(\1)$ must be $(H,\chi_0)$-distinguished (Propositions \ref{infact1}, \ref{onedirection}, and \ref{versace}). When $d$ is odd, if $m$ is odd, then it follows from Mackey theory and Frobenius reciprocity that if $St(\1)$, hence 
$ind_{\B}^G(\1)$, is $(H,\chi)$-distinguished, then $\chi=\chi_0$ and we are done (Theorem \ref{odd}). In the other cases, we prove that 
when $\chi=\chi_1$, the Steinberg representation $St(\1)$ is not $(H,\chi_1)$-distinguished. If it was, the linear form $L_1$ would descend from $ind_{\B}^G(\1)$, but as $ind_{\B}^G(\1)$ affords 
multiplicity $1$ by Mackey theory and Frobenius reciprocity, the $(H,\chi_1)$-equivariant linear form $L_1$ is given by analytic continuation of an integral according to \cite{BD}. But then, for some well chosen $P$, and well chosen $f\in ind_P^G(\1)$, the computation of $L_1(f)$ reduces to the case where $G$ has split semi-simple rank $1$, and we explicitly show that it does not vanish in this case, hence in the general case (see Theorem \ref{main1}, Propositions \ref{m=2}, \ref{nonzero} and Theorem \ref{main2}). This contradicts the fact that $L_1$ descends to $St(\1)$. In fact, when $d$ is even, we slightly simplify the argument, avoiding 
analytic continuation, though the proof described above would also work in this case.\\
Notice that, in contrast with the method of Broussous and Courtès, our proof is uniform whatever the type of ramification of $E/F$ is. \\
To conclude this introduction, we mention that Dipendra Prasad explained to us that the fact that the ``middle orbits'' cannot support any 
$(H,\chi)$-invariant linear form on $ind_{P_\emptyset}^G(\1)$ is a very general fact, which follows from 
the structure of the groups $P_\emptyset\cap gHg^{-1}$. This is encouraging for a possible generalisation of the method to all reductive groups.

\begin{ack}
I thank the referee for pointing out a mistake in Section \ref{prelim} of the previous version. I thank Ioan Badulescu and Paul Broussous for useful explanations. I thank Dipendra Prasad for his comments on a possible generalisation of this method. I thank R. Kurinczuk and O. Selim for fruitful exchanges. 
I also thank the grant ANR-13-BS01-0012 FERPLAY for financial support.
\end{ack}

\section{General facts and further notations}

We only consider smooth representations on complex vector spaces. 
Let $X$ be a locally compact totally disconnected space, and $L$ a locally compact totally disconnected group acting 
continuously and properly on $X$. If $\chi$ is a character of $L$, we denote 
by $\mathcal{C}_c^\infty(L\backslash X,\chi)$ the space of smooth functions on $X$, with support compact mod $L$, and which transform by $\chi$ under left translation by elements of $L$. If $X$ is a group $Q$ which contains $L$, then we write 
$ind_{L}^Q(\chi)$ for $\mathcal{C}_c^\infty(L\backslash Q,\chi)$, which is a representation of $Q$ by right translation.
 We will use a lot the following two theorems, which are respectively Frobenius reciprocity and Mackey theory for compactly induced representations. The first one is a consequence of Proposition 2.29 of \cite{BZ76}

\begin{prop}\label{Frob}
Let $\chi$ be a character of $L$, then the vector space $Hom_Q(ind_{L}^Q(\chi),\mu)$ is isomorphic to 
$Hom_L(\D\chi,\mu)$, where $\D$ is the quotient of the modulus character of $L$ by that of $Q$.
\end{prop}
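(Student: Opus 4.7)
The plan is to establish this Frobenius-type adjunction by exhibiting explicit mutually inverse maps between the two Hom spaces. The appearance of $\Delta = \delta_L/\delta_Q$ stems from the fact that $L\backslash Q$ carries no $Q$-invariant measure in general, only a quasi-invariant one whose Radon--Nikodym cocycle is controlled by $\Delta$. I would work directly from the definition $ind_L^Q(\chi) = \mathcal{C}_c^\infty(L\backslash Q,\chi)$.

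In the easy direction, given an $L$-equivariant vector $v \in V_\mu$ satisfying $\mu(l)v = \Delta(l)\chi(l)v$ for all $l \in L$, I would define $\Phi_v : ind_L^Q(\chi) \to V_\mu$ by the integration formula
\[
\Phi_v(f) \;=\; \int_{L\backslash Q} f(q)\,\mu(q)^{-1}(v)\, d\bar{q},
\]
where $d\bar q$ is a fixed quasi-invariant measure on $L\backslash Q$ chosen compatibly with $\Delta$. The integrand descends to $L\backslash Q$ precisely because of the matching of the $\chi$-equivariance of $f$, the $\Delta\chi$-equivariance of $v$, and the modulus character built into the quasi-invariance of $d\bar q$. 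The $Q$-equivariance of $\Phi_v$ under right translation on both sides is then immediate.

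For the reverse direction, given $T \in Hom_Q(ind_L^Q(\chi),\mu)$, I would recover a vector $v_T \in V_\mu$ by testing $T$ on approximate units at the identity. For each sufficiently small compact open subgroup $K$ of $Q$, consider the function $\phi_K \in ind_L^Q(\chi)$ supported on $LK$ with $\phi_K(lk) = \chi(l)$. As $K$ shrinks, the rescaled vectors $\mathrm{vol}(L \cap K\backslash K)^{-1} \cdot T(\phi_K)$ stabilise to a vector $v_T$. Computing $\mu(l) v_T$ by invoking the $Q$-equivariance of $T$ and then rewriting $l \cdot \phi_K$ as a right-translate of $\phi_K$ produces exactly the factor $\Delta(l)$, arising from the Jacobian of the change of variable on $L\backslash Q$ needed to re-centre the support.

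The delicate point I would expect to wrestle with is the precise bookkeeping of modulus characters: fixing conventions for left versus right Haar measures, for the direction in which $L$ acts on $ind_L^Q(\chi)$, and for the quasi-invariant measure on $L\backslash Q$, so that the twist comes out exactly as $\Delta = \delta_L/\delta_Q$ rather than its inverse or square root. Once those conventions are pinned down, checking that the two constructions are mutually inverse reduces to verifying the identity on the generating test functions $\phi_K$, which is a direct computation from the definitions. Alternatively, one may bypass the direct construction entirely and deduce the statement from the smooth-induction Frobenius reciprocity of \cite{BZ76}, Proposition~2.29, combined with the standard duality between compact and smooth induction, which repackages exactly the same modulus-character calculation.
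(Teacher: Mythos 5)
The paper does not prove this proposition; it simply cites Proposition~2.29 of Bernstein--Zelevinsky \cite{BZ76}, of which it is a direct specialisation (take $\rho=\chi$ one-dimensional). Your proposal instead reconstructs the adjunction by exhibiting the two mutually inverse maps, which is precisely the content of the cited result, and you also note at the end that one may fall back on \cite{BZ76} directly --- so you effectively cover both the paper's route and a self-contained one.

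Your constructive outline is sound. In the forward direction, the integrand $F(q)=f(q)\mu(q)^{-1}v$ transforms under $q\mapsto lq$ by $\Delta(l)^{-1}=\delta_Q(l)/\delta_L(l)$, which is exactly the equivariance class for which a right-$Q$-invariant functional on $L\backslash Q$ exists, so the twist $\Delta$ on $v$ is the right one (not its inverse). In the reverse direction, your phrase ``rewriting $l\cdot\phi_K$ as a right-translate of $\phi_K$'' is slightly imprecise: if $l\cdot\phi_K$ denotes the action $R_l$ of $l\in L\subset Q$, one computes $R_l\phi_K=\chi(l)\,\phi_{lKl^{-1}}$, so it is $\chi(l)$ times the test function attached to the \emph{conjugated} subgroup, not a right-translate. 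The factor $\Delta(l)=\delta_L(l)/\delta_Q(l)$ then arises from comparing the normalisations $c_K$ and $c_{lKl^{-1}}$ via $\mathrm{vol}_L(l(L\cap K)l^{-1})=\delta_L(l)^{-1}\mathrm{vol}_L(L\cap K)$ and $\mathrm{vol}_Q(lKl^{-1})=\delta_Q(l)^{-1}\mathrm{vol}_Q(K)$, confirming $\mu(l)v_T=\Delta(l)\chi(l)v_T$. The trade-off between the two approaches is the usual one: the paper's citation is short and standard, while your explicit construction makes the origin of the modulus twist $\Delta=\delta_L/\delta_Q$ visible, which is useful given how often the proposition is invoked later in the paper to pin down admissible characters on closed orbits.
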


The next one is a consequence of Theorem 5.2 of \cite{BZ}. Notice that by Corollary 6.16 of \cite{HW}, if $P$ is a parabolic 
subgroup of $G$, then $P\backslash G/H$ is finite (see also Sections \ref{rep1} and \ref{rep2}). 

\begin{prop}\label{filtr}
Let $P$ be a parabolic subgroup of $G$, and $\mu$ be a character of $P$. Take a set of representatives $(u_1,\dots,u_r)$ 
of $P\backslash G/ H$, ordered such that $X_i=\coprod_{k=1}^i Pu_k H$ is open in $G$ for each $i$. Then $ind_P^G(\mu)$ is filtered by 
the $H$-submodules $\mathcal{C}_c^\infty(P\backslash X_i,\mu)$, and 
$$\mathcal{C}_c^\infty(P\backslash X_i,\mu)/\mathcal{C}_c^\infty(P\backslash X_{i-1},\mu)\simeq 
\mathcal{C}_c^\infty(P\backslash Pu_i H,\mu).$$
\end{prop}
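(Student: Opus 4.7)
The result is Mackey theory (the Bernstein--Zelevinsky stratification) applied to the $(P,H)$-double coset decomposition of $G$, so my plan is to reduce to Theorem 5.2 of \cite{BZ} once the correct ordering of representatives has been produced.

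First I would justify that the representatives $(u_1,\dots,u_r)$ can be chosen so that every $X_i=\coprod_{k=1}^i Pu_k H$ is open in $G$. Since $P\backslash G/H$ is finite by Corollary 6.16 of \cite{HW}, and each double coset $PuH$ is locally closed in $G$, a standard topological argument shows that among finitely many such orbits at least one is open; one then iteratively peels off open orbits, picking $Pu_1 H$ open in $G$, then $Pu_2 H$ open in the closed complement $G\setminus Pu_1 H$, and so on. By construction each $X_i$ is open in $G$.

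With this ordering in hand, each $X_i$ is open and $(P\times H)$-stable, while $X_{i-1}$ is open in $X_i$ with locally closed complement $Pu_i H$. I would then invoke Theorem 5.2 of \cite{BZ}: extension by zero embeds $\mathcal{C}_c^\infty(P\backslash X_i,\mu)$ as an $H$-submodule of $ind_P^G(\mu)=\mathcal{C}_c^\infty(P\backslash G,\mu)$, giving the asserted filtration, while restriction to the closed stratum $Pu_i H$ yields a short exact sequence of $H$-modules
\[
0\to \mathcal{C}_c^\infty(P\backslash X_{i-1},\mu)\to \mathcal{C}_c^\infty(P\backslash X_i,\mu)\to \mathcal{C}_c^\infty(P\backslash Pu_i H,\mu)\to 0,
\]
identifying the successive quotients as claimed. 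The non-trivial point here, which is precisely what Bernstein--Zelevinsky supplies, is that every smooth $\mu$-equivariant function on the closed stratum with compact support mod $P$ extends to a smooth section on $X_i$ with compact support mod $P$.

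The main obstacle is the ordering step: one must know that orbits of $H$ on $P\backslash G$ are locally closed and that a finite partition by such orbits always contains an open one. In the $p$-adic reductive setting this follows from algebraic facts about the flag variety $G/P$ (and is implicit in the references already cited), but it is the only point where topology, rather than pure representation theory, intervenes. Once the ordering is fixed, the filtration and the identification of its subquotients follow mechanically from the Bernstein--Zelevinsky theorem.
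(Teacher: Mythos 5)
Your proposal is correct and follows the same route the paper takes: the paper simply cites Theorem~5.2 of Bernstein--Zelevinsky for the filtration and Corollary~6.16 of Helminck--Wang for the finiteness of $P\backslash G/H$, which is exactly what you do. The extra paragraph you supply on producing the ordering (open orbit peeled off iteratively, using that $H$-orbits on $P\backslash G$ are locally closed and that a finite partition of $P\backslash G$ into locally closed $H$-stable sets must contain an open one, e.g.\ by Baire category plus homogeneity) is a helpful expansion of a step the paper takes for granted, but does not represent a different method.
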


Finally, we recall the following result from \cite{HW}, which is Proposition 3.4 in there. It in particular implies that if 
$P$ contains a minimal $\tau$-split parabolic subgroup (see below), then $\mathcal{C}_c^\infty(P\backslash P H,\mu)$ is 
a subspace of $ind_P^G(\mu)$.

\begin{prop}\label{bigcell}
Let $P$ be a parabolic subgroup of $G$, and $\tau$ be an $F$-rational involution of $G$. The class $PH$ is open if and only 
if $P$ contains a minimal parabolic $\tau$-split subgroup $P'$ (which means that it is sent to its opposite parabolic subgroup $(P')^-$ by $\tau$).
\end{prop}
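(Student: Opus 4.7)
The plan is to translate the openness of $PH$ into a Lie-algebra condition and then exploit the $\tau$-eigenspace decomposition $\mathfrak{g} = \mathfrak{h} \oplus \mathfrak{q}$, where $\mathfrak{q} = \mathfrak{g}^{-\tau}$ (available since the residual characteristic is not $2$). By smoothness of the multiplication map $P \times H \to G$, the orbit $PH$ is open in $G$ if and only if the differential at $(e,e)$ is surjective, i.e.\ $\mathfrak{p} + \mathfrak{h} = \mathfrak{g}$; equivalently, the projection $\mathfrak{p} \to \mathfrak{q}$ is surjective.

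For the \emph{if} direction, suppose $P \supseteq P'$ with $P' = M' N'$ satisfying $\tau(M') = M'$ and $\tau(N') = (N')^-$, so that $\tau$ swaps $\mathfrak{n}'$ with $\mathfrak{n}'^-$ and preserves $\mathfrak{m}'$. I would show the stronger equality $\mathfrak{p}' + \mathfrak{h} = \mathfrak{g}$. Using the triangular decomposition $\mathfrak{g} = \mathfrak{n}'^- \oplus \mathfrak{m}' \oplus \mathfrak{n}'$, any $x \in \mathfrak{n}'^-$ may be written
$$x \;=\; -\tau(x) + \bigl(x + \tau(x)\bigr),$$
where $-\tau(x) \in \mathfrak{n}' \subseteq \mathfrak{p}'$ and $x + \tau(x) \in \mathfrak{h}$, the latter being manifestly $\tau$-fixed. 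Combined with the trivial $\mathfrak{p}' \subseteq \mathfrak{p}' + \mathfrak{h}$ this gives $\mathfrak{g} = \mathfrak{p}' + \mathfrak{h} \subseteq \mathfrak{p} + \mathfrak{h}$, so $PH$ is open.

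For the converse, assume $\mathfrak{p} + \mathfrak{h} = \mathfrak{g}$. The strategy is first to upgrade this to $\mathfrak{p} + \tau(\mathfrak{p}) = \mathfrak{g}$, which by standard parabolic-subgroup theory forces $L := P \cap \tau(P)$ to be a common Levi of $P$ and $\tau(P)$. Then, inside the $\tau$-stable reductive group $L$, I would pick a maximal $\tau$-split torus $S$ of $G$: the Levi $M' := Z_G(S)$ lies in $L \subseteq P$, and the parabolic $P'$ of $G$ with Levi $M'$ and unipotent radical generated by those $S$-positive root subgroups contained in $\mathfrak{p}$ is, by construction, $\tau$-split and contained in $P$; its minimality follows from that of $S$.

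The main obstacle is this first step of the converse: the naive manipulation, writing $x = p + h$ with $p \in \mathfrak{p}$, $h \in \mathfrak{h}$, gives only $x - \tau(x) = p - \tau(p) \in \mathfrak{p} + \tau(\mathfrak{p})$, hence $\mathfrak{q} \subseteq \mathfrak{p} + \tau(\mathfrak{p})$, but does not establish $\mathfrak{h} \subseteq \mathfrak{p} + \tau(\mathfrak{p})$. One needs additional input from the parabolic structure of $\mathfrak{p}$: using a non-degenerate $G$-invariant bilinear form on $\mathfrak{g}$, with respect to which $\mathfrak{p}^\perp$ is the nilradical $\mathfrak{n}$ and $\mathfrak{h}^\perp = \mathfrak{q}$ (since $\tau$ is an isometry), the claim becomes dual to the implication $\mathfrak{n} \cap \mathfrak{q} = 0 \Rightarrow \mathfrak{n} \cap \tau(\mathfrak{n}) = 0$. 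Proving this implication, together with the subsequent construction of a minimal $\tau$-split parabolic from the data of $(L, S)$, is the structural core of \cite{HW}.
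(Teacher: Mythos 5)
The paper does not prove this proposition; it imports it verbatim as Proposition~3.4 of Helminck--Wang \cite{HW}, so there is no in-paper argument to compare against.

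Your \emph{if} direction is correct and complete as a Lie-algebra computation: since $\tau(\mathfrak{n}')=\mathfrak{n}'^-$, the decomposition $x=-\tau(x)+(x+\tau(x))$ for $x\in\mathfrak{n}'^-$ shows $\mathfrak{n}'^-\subseteq\mathfrak{p}'+\mathfrak{h}$, hence $\mathfrak{g}=\mathfrak{p}'\oplus\mathfrak{n}'^-\subseteq\mathfrak{p}+\mathfrak{h}$. (One should also justify that surjectivity of $d(P\times H\to G)$ at the identity genuinely detects openness of $P(F)H(F)$ in $G(F)$ --- over a $p$-adic field this passes through rational points of the algebraic orbit, a point \cite{HW} treats with some care --- but this is a standard issue.)

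The converse, however, is where the content of the proposition lies, and your sketch does not close it. You isolate the reduction step $\mathfrak{p}+\mathfrak{h}=\mathfrak{g}\Rightarrow\mathfrak{p}+\tau(\mathfrak{p})=\mathfrak{g}$, dualize it to $\mathfrak{n}\cap\mathfrak{q}=0\Rightarrow\mathfrak{n}\cap\tau(\mathfrak{n})=0$, and then defer. But notice that this dual implication is not a formal manipulation: under the hypothesis $\mathfrak{n}\cap\mathfrak{q}=0$, the $\tau$-stable space $\mathfrak{n}\cap\tau(\mathfrak{n})$ decomposes into $\pm1$-eigenspaces with the $(-1)$-part killed, and one checks directly that $\mathfrak{n}\cap\tau(\mathfrak{n})=\mathfrak{n}\cap\mathfrak{h}$. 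So what must actually be shown is $\mathfrak{n}\cap\mathfrak{q}=0\Rightarrow\mathfrak{n}\cap\mathfrak{h}=0$, i.e.\ that the nilradical meets \emph{neither} eigenspace of $\tau$; this is a genuine structural statement about how $\mathfrak{n}$ sits relative to $\tau$, and it is not obviously easier than the proposition itself (indeed, given the conclusion of the proposition one deduces it immediately, so a direct proof would have to re-derive essentially the same root-system analysis as \cite{HW}). Likewise your final step --- extracting a maximal $\tau$-split torus $S$ inside the common Levi $L=P\cap\tau(P)$ and arguing that the associated $\tau$-split parabolic both sits inside $P$ and is \emph{minimal} --- relies on the existence and conjugacy theory of maximal $\tau$-split tori and of $\tau$-split parabolics, which is again the core of \cite{HW}, Sections~2--4. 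In short: the easy direction is proved, the hard direction is relocated rather than resolved, and the key intermediate claim $\mathfrak{p}+\tau(\mathfrak{p})=\mathfrak{g}$ is left unestablished even though it is where the difficulty concentrates.
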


We already said that $G$ is either $GL(2m,\D)$ when $d$ is even, or $GL(m,D_E)$ when $d$ is odd. We denote by $\B$ 
the minimal parabolic subgroup of $G$ corresponding to upper triangular matrices via this identification. We denote by $M_\emptyset$ 
its Levi subgroup consisting of diagonal matrices. We denote by $N_\emptyset$ the unipotent radical of $\B$. 
We denote by $\Phi$ the roots of the center $Z({M_{\emptyset}})$ of ${M_{\emptyset}}$ acting on the Lie algebra $Lie(G)$, by $\Phi^+$ those corresponding to the restriction 
of this action on $Lie(N_\emptyset)$, and by $\Phi^-$ those corresponding to the restriction 
of this action on $Lie(N_\emptyset^-)$. In particular $Lie(N_\emptyset)=\oplus_{\alpha \in \Phi^+} N_\alpha$ and 
$Lie(N_\emptyset^-)=\oplus_{\alpha \in \Phi^-} N_\alpha$, with 
obvious notations. If $P$ is a parabolic subgroup of $G$ containing $\B$, with standard Levi factor $M$, we denote by $\Phi_M$ 
the roots of $Z({M_{\emptyset}})$ on $Lie(M)$. We define $\Phi_M^+$ and $\Phi_M^-$ in a similar fashion as above.\\

\noindent We denote by $|.|_F$ the normalised absolute value of $F$, and by $|.|_E$ that of $E$.

\section{The case $d$ even}

\subsection{Preliminary remarks on $D\otimes_F E$}\label{prelim}

Let $D$ be a central division algebra of dimension $d^2$ over $F$ with $d=2\d$ even. In this case $E$ identifies with a sub-field of 
$D$, and $D\otimes_F E\simeq \M(2,\D)$, where $\D$ is the centraliser $Z_{D}(E)$ of $E$, and is
 central division algebra of dimension $\d^2$ over $E$. In particular, there is an involution $\sigma$ of 
 $\M(2,\D)$ corresponding to the involution $d\otimes z\mapsto d\otimes \overline{z}$ of $D\otimes_F E$ which fixes a subalgebra of $\M(2,\D)$ isomorphic to $D$. In order to compute explicitly the double cosets in the next section, it will be convenient to have a less abstract description of this involution of $\M(2,\D)$, which in particular depends on the choice of the isomorphism between $D\otimes_F E$ and $\M(2,\D)$. This is what this section will be devoted to. 
 
 First, we recall that by the Skolem-Nother theorem, the Galois involution of $E$ over $F$ is induced by an inner automorphism $int_\e:d\mapsto \e d \e^{-1}$ of $D$, for $\e\in D^\times$. Notice that in particular, $\e$ does not belong to $\D$, hence that $D=\D\oplus \e. \D$, i.e. that $(1,\e)$ is a basis of $D$ as a right $\D$-vector space. We also recall why $D\otimes_F E$ and $\M(2,\D)$ are isomorphic. Indeed, for $d_0\in d$, denoting by $\l(d_0):d\mapsto d_0 d$ and $\rho(d_0):d\mapsto d d_0$ the left and right translations by $d_0$ on $D$, there is a canonical isomorphism $\phi_1$ between $D\otimes_F E$ and $End(D)_{\D}$, defined by $$\phi_0:d\otimes e\mapsto \l(d)\circ \rho(e).$$
 
 Then we set $$\phi_1: u\mapsto Mat_{(1,\e)}(u)$$ to identify $End(D)_{\D}$ with $\M(2,\D)$, and set 
 $\phi=\phi_1\circ \phi_0$. Via these identifications, an automorphism $\psi$ of $\M(2,\D)$ will correspond to an automorphism $\psi_0$ of $D\otimes_F E$, and $\psi_1$ of $End(D)_{\D}$. In particular, the Galois involution $\sigma_0$ of $D\otimes_F E$, corresponds to the involution $\sigma_1$ of $End(D)_\D$ and $\sigma$ of $\M(2,\D)$. We denote by $\theta$ the $F$-linear (and in fact $E$-semi linear) automorphism of $\M(2,\D)$ which is given by applying $int_\e$ to every coefficient of a matrix in $\M(2,\D)$. We also set 
 $$s_\e=\phi_1(\l(\e))=\begin{pmatrix} & \e^2 \\ 1 & \end{pmatrix}\in \M(2,\D).$$

\begin{LM} We have the equality of involutions $\sigma=\theta^{-1}\circ int_{s_\e}=int_{s_\e} \circ \theta^{-1}.$
\end{LM}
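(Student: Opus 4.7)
The plan is to prove the two equalities separately. The second, $\theta^{-1}\circ int_{s_\e}=int_{s_\e}\circ\theta^{-1}$, reduces to checking that $\theta$ fixes $s_\e$: indeed $\theta$ is an $F$-algebra automorphism, so it commutes with conjugation by any of its fixed points, and the same then holds for $\theta^{-1}$. The identity $\theta(s_\e)=s_\e$ is immediate from the definition of $\theta$ (entry-wise application of $int_\e$) once one observes that $\e^2\in\D$ (since $\e^2$ commutes with $E$) and that $int_\e(\e^2)=\e^2$.

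For the first equality $\sigma=\theta^{-1}\circ int_{s_\e}$, both sides are $F$-algebra automorphisms of $\M(2,\D)$, so they agree as soon as they coincide on a set generating $\M(2,\D)$ as an $F$-algebra. Since $D=\D\oplus\e\D$, the algebra $D\otimes_F E$ is generated over $F$ by $\D\otimes 1$, $\e\otimes 1$ and $1\otimes E$, and via $\phi$ this produces three families of generators of $\M(2,\D)$: the scalar matrices $eI$ for $e\in E$ (image of $1\otimes E$, using that $\rho(e)$ is central in $End(D)_\D$ when $e\in E\subset Z(\D)$), the matrix $s_\e$ itself (image of $\e\otimes 1$), and the diagonal matrices $\phi_1(\l(\d))=\operatorname{diag}(\d,int_\e^{-1}(\d))$ for $\d\in\D$, obtained by evaluating $\l(\d)$ on the basis $(1,\e)$ via $\l(\d)(\e)=\d\e=\e\cdot int_\e^{-1}(\d)$.

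The three verifications are then routine. For $eI$: $\sigma$ sends it to $\bar e I$ by definition of $\sigma_0$, and $\theta^{-1}\circ int_{s_\e}(eI)=\theta^{-1}(eI)=\bar e I$. For $s_\e$: $\sigma(s_\e)=s_\e$ since $\e\otimes 1\in D\otimes 1$ is $\sigma_0$-fixed, and $\theta^{-1}\circ int_{s_\e}(s_\e)=\theta^{-1}(s_\e)=s_\e$ by the first step. For $\operatorname{diag}(\d,int_\e^{-1}(\d))$, which is $\sigma$-fixed (it lies in $\phi(D\otimes 1)$), conjugation by $s_\e$ yields $\operatorname{diag}(int_\e(\d),\d)$ from a direct matrix calculation using $\e^2\cdot int_\e^{-1}(\d)\cdot\e^{-2}=\e\d\e^{-1}=int_\e(\d)$, and applying $\theta^{-1}$ entry-wise returns the original matrix.

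The only point requiring attention is the semi-linear bookkeeping in the basis $(1,\e)$: one must systematically use $x\e=\e\cdot int_\e^{-1}(x)$ for $x\in\D$ and keep track that $int_\e$ preserves $\D$ and fixes $\e^2$. Apart from this, everything is elementary.
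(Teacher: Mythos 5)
Your proof is correct, but it takes a genuinely different route from the paper for the first equality $\sigma = \theta^{-1}\circ int_{s_\e}$. You verify the identity directly on a generating set of $\M(2,\D)$ — the central scalars $eI$ coming from $1\otimes E$, the element $s_\e = \phi_1(\l(\e))$, and the diagonal matrices $\phi_1(\l(\d)) = \operatorname{diag}(\d,\, int_\e^{-1}(\d))$ coming from $\D\otimes 1$ — relying on the fact that two $F$-algebra automorphisms agreeing on generators must coincide. The paper instead works in $End(D)_\D$, observes that both $\sigma_1$ and $\sigma'_1 = \theta_1^{-1}\circ int_{\l(\e)}$ fix $\l(D)$ pointwise and are $E$-semilinear, so that $\sigma_1\circ(\sigma'_1)^{-1}$ is an $E$-linear automorphism of a central simple $E$-algebra; Skolem--Noether then forces it to be $int_l$ with $l$ centralising $\l(D)$, hence $l\in\rho(E)$, hence $int_l = \mathrm{id}$. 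Your computation buys self-containedness and avoids Skolem--Noether at the cost of the entry-wise bookkeeping with $int_\e$; the paper's argument is shorter once Skolem--Noether is granted and does not require exhibiting generators explicitly. For the commutation $\theta^{-1}\circ int_{s_\e} = int_{s_\e}\circ\theta^{-1}$, your observation that $\theta(s_\e) = s_\e$ (because $int_\e$ fixes $1$ and $\e^2$) is the same fact the paper invokes in the equivalent form ``$s_\e$ commutes with $\e\cdot I_2$ in $\M(2,D)$''. All the individual steps — $\e^2\in\D$, $int_\e$ preserving $\D$, the semilinear relation $x\e = \e\cdot int_\e^{-1}(x)$, the conjugation $s_\e\operatorname{diag}(\d, int_\e^{-1}(\d))s_\e^{-1} = \operatorname{diag}(int_\e(\d),\d)$ — check out.
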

\begin{proof}
Write $\sigma'$ for $\theta^{-1} \circ int_{s_\e}$. It will be more convenient to work with 
$End(D)_\D$. First we notice that $\sigma_1$ fixes $\l(D)$ by definition, and we claim that $\sigma'_1  =
\theta_1^{-1} \circ int_{\l(\e)}$ as well. Indeed, a map $u$ in $End(D)_\D$ belongs to $\l(D)=End(D)_D$ if and only if it commutes with $\rho(\e)$, i.e. if and only if 
$\rho(\e)^{-1}\circ u= u \circ \rho(\e)^{-1}$. This can be rewritten as 
$\l(\e)^{-1}\circ int_\e \circ u= u \circ \l(\e)^{-1}\circ int_\e\in End_F(D)$, which can in turn be written as 
$\sigma'_1(u)=u$ (as $\theta_1=int_{int_\e}$).
Now notice that both $\sigma_1$ and $\sigma'_1$ are $E$-semi linear, hence $\sigma_1\circ (\sigma'_1)^{-1}$ is $E$-linear, hence by the Skolem-Noether theorem, it is an inner automorphism of 
$\mathcal{M}(2,\D)$. It is thus equal to $int_l$, for some $l$ in $(End(D)_\D)^\times$. But as $\sigma_1\circ (\sigma'_1)^{-1}$ fixes $\l(D)$, one has $l\circ \lambda(d)=\lambda (d)\circ l$ for all $d\in D$, i.e. 
$$l\in End_D(D)_\D=End_D(D)\cap End(D)_\D=\rho(D)\cap End(D)_\D= \rho(Z_D(\D))=\rho(E).$$ In particular 
$int_l$ is the identity of $End(D)_\D$, hence $\sigma_1=\sigma_1'$, and this proves the first equality of the lemma. The second comes from the fact that $s_\e$ commutes with $\e.I_2$ in $\mathcal{M}(2,D)$.
\end{proof}

We denote by $N_{rd,E}$ the reduced norm on $GL(k,\D)$, and by $N_{rd,F}$ that on $GL(k,D)$. We denote by 
$\nu_E$ the positive character $\nu_E:g\mapsto |N_{rd,E}(g)|_E$ of $GL(k,\D)$, and by 
$\nu_F$ the positive character $\nu_F:g\mapsto |N_{rd,F}(g)|_F$ of $GL(k,D)$.

\subsection{Representatives of $P\backslash G/H$}\label{rep1}

We set $n=2m$, and $\theta$ the $E$-semi linear automorphism of 
$\mathcal{M}(n,\D)$ which acts by $int_\e$ on each coefficient of a matrix in 
$\mathcal{M}(n,\D)$. We let $V$ be the right vector space $D^m$ with canonical basis $\mathcal{B}_c=(e_1,\dots,e_m)$, which we identify 
to $\D^n$ via the basis $\mathcal{U}=(u_1,\dots,u_n)$, where $u_i=e_i$ if $i=1,\dots,m$, and $u_{m+i}=e_{m+1-i}.\epsilon$. In this basis, according to Section \ref{prelim}, the space 
$End(V)_\D$ identifies with $\M_n(\D)$, and $End(V)_D$ with the fixed points in $\M_n(\D)$ of the involution 
$$\sigma=\theta^{-1} \circ int_{s_\e,n}=int_{s_\e,n}\circ \theta^{-1},$$ where 
$$s_{\e,n}=Mat_{\mathcal{U}}(\l(\e))=
\begin{pmatrix} & & & & & \e^2 \\ & & & & \iddots & \\ & & & \e^2  & &  \\ & & 1 & & &  \\ & \iddots & & & &  \\ 1 & & & & & \end{pmatrix}.$$ 
When the size of this matrix is clear, we will simply write $s_\e$ for $s_{\e,n}$. In particular the group $H=G^\sigma$ identifies with $GL(m,D)$. 
Clearly, a right $\D$-subspace $W$ of $V$ is a right $D$-subspace if and only if $\rho(\e)(W)=W.\e \subset W$.\\
If $\overline{n}=(n_1,\dots,n_r)$ is a partition of $n=2m$ (i.e. $n=n_1+\dots+n_r$ with $n_i>0$), we denote by $V_i^0$ the subspace 
$Vect(e_1,\dots, e_{n_1+\dots+n_i})_\D$ of $V$. 
We recall that if $P=P_{(n_1,\dots,n_r)}$ is a standard parabolic subgroup of $G$, then $G/P$ identifies to the flags 
$0\subset V_1\subset \dots \subset V_r=V$ such that $dim(V_i)=n_1+\dots+n_i$ via the map 
$gP\mapsto (gV_1^0,\dots, gV_r^0)$. If $\mathcal{F}=0\subset V_1\subset \dots \subset V_r=V$ is a flag, for $i\leq j$, we fix a complement 
$S_{i,j}$ of the space $V_i\cap V_{j-1}.\e+V_{i-1}\cap V_j.\e$ in $V_i\cap V_j.\e$. Moreover, if $i=j$, we choose $S_{i,i}$ 
to be $\rho(\e)$-stable (or equivalently a right $D$-subspace). We then set $S_{j,i}=S_{i,j}.\e$. One checks that $V$ 
decomposes as 
$$V=\oplus_{i,j} S_{i,j}$$ and that $$V_i=\oplus_{k=1}^i\oplus_{l=1}^r S_{k,l}.$$
Notice that as the spaces $S_{i,i}$ are right $D$-vector spaces, their dimension $n_{i,i}=2m_{i,i}$ over $\D$ is even. 
Notice as well that the dimensions $n_{i,j}$ (over $\D$) of the spaces $S_{i,j}$ are uniquely determined by $\mathcal{F}$. Suppose 
that two flags $\mathcal{F}=0\subset V_1\subset \dots \subset V_r=V$ and 
$\mathcal{F}'=0\subset V_1'\subset \dots \subset V_r'=V$ are in the same $H$-orbit (i.e. 
$\mathcal{F}'=h\mathcal{F}$), then clearly one can choose $S'_{i,j}=hS_{i,j}$ for all $i$, hence for all $(i,j)$, 
we have $n'_{i,j}=n_{i,j}$. Conversely, suppose that $\mathcal{F}'$ and $\mathcal{F}'$ are two flags such that for all $(i,j)$, 
one has $n'_{i,j}=n_{i,j}$. For $i=j$, 
as $S_{i,i}$ and $S'_{i,i}$ are both right $D$-vector spaces, then one can choose an isomorphism 
$u_{i,i}\in Iso(S_{i,i},S'_{i,i})_D$. If $i<j$, then take an element $u_{i,j}\in Iso(S_{i,j},S'_{i,j})_\D$, and define 
$u_{j,i}= \rho(\e) \circ u_{i,j} \circ \rho(\e)^{-1}\in  Iso(S_{j,i},S'_{j,i})_\D$, then we have 
$u_{i,j}\oplus u_{i,j}\in Iso(S_{i,j}\oplus S_{j,i},S'_{i,j}\oplus S'_{j,i})_D$ because it clearly commutes with $ \rho(\e)$.\\
Hence if $\overline{n}=(n_1,\dots,n_r)$ is a partition of $n$, we define $I(\overline{n})$ to be the set of symmetric matrices 
$s=(n_{i,j}) \in \mathcal{M}_r(\N)$ with positive integral entries, even on the diagonal, and the sum of the $i$-th row being equal to $n_i$. Let 
$s=(n_{i,j})$ be such a matrix, so that one can write $n$ as the ordered sum
$$n=m_{1,1}+n_{1,2}+\dots+n_{1,r}+ m_{2,2}+ n_{2,3}+ \dots+n_{2,r}+ m_{3,3}+ \dots +m_{r-1,r-1}+ n_{r-1,r}+m_{r,r}$$
$$+m_{r,r}+n_{r,r-1}+m_{r-1,r-1}+\dots+m_{3,3}+n_{r,2}+\dots +n_{3,2}+ m_{2,2}+ n_{r,1}+ \dots+ n_{2,1}+m_{1,1},$$ 
but one can also write it using the lexicographical ordering: 
$$n=n_{1,1}+\dots+n_{1,r}+n_{2,1}+\dots+n_{2,r}+\dots+n_{r,1}+\dots+n_{r,r}.$$
We denote by $w_s$ the matrix of the permutation (still denoted $w_s$) defined as follows:\\
If $i\in \llbracket{1,\dots,r \rrbracket}$, then for $k\in \llbracket{1,\dots,m_{i,i}\rrbracket}$, we set
$$w_s(m_{1,1}+\dots+m_{i-1,i-1}+n_{i-1,i}+\dots+n_{i-1,r}+k)=n_{1,1}+\dots+n_{i-1,1}+\dots+n_{i-1,r}+k,$$ and 
$$w_s(m_{1,1}+\dots+m_{i+1,i+1}+n_{r,i}+\dots+n_{i+1,i}+k)=n_{1,1}+\dots+n_{i-1,1}+\dots+n_{i-1,r}+k+m_{i,i}.$$
If $i<j$, for $k\in \llbracket{1,\dots,n_{i,j}\rrbracket}$ we set 
$$w_s(m_{1,1}+ \dots+ m_{i,i}+ n_{i,i+1}+ \dots +n_{i,j-1}+k)=n_{1,1}+ \dots + n_{1,r}+\dots+ n_{i,1}+ \dots+ \dots +n_{i,j-1}+k $$ and 
$$w_s(m_{1,1}+ \dots+ m_{i+1,i+1}+ n_{r,i} +\dots+ n_{j+1,i}+k)= n_{1,1}+ \dots + n_{1,r}+\dots+ n_{j,1}+ \dots+ \dots +n_{j,i-1}+k .$$ 
It follows from the definition of $w_s$, that the flag $\mathcal{F}=0\subset V_1\subset \dots \subset V_r=V$, with 
$V_i=w_s^{-1}(V_i^0)$, and the fact that $\rho(\e)(Vect(u_i))=Vect(u_{n+1-i})$, that $dim(S_{i,j})=n_{i,j}$ for all couples $(i,j)$. We thus proved the following result:

\begin{prop}\label{representatives}
Let $\overline{n}$ be a partition of $n$, and $P=P_{\overline{n}}$, then $G=\coprod_{s\in I(\overline{n})} Hw_s^{-1}P$, hence 
$G=\coprod_{s\in I(\overline{n})} Pw_s H$. In particular, it follows from the proof of the proposition, or from its statement, that the set of representatives of $P\backslash G/H$ does not depend on the division algebra $D$ of even index, i.e. the identity map of the set $\{w_s,\ s\in I(\overline{n})\}$, induces a bijection from $P\backslash G/H$ to $P_{\overline{n}}(E)\backslash GL(n,E)/GL(m,\mathbb{H})$, where $\mathbb{H}$ is the quaternion algebra over $F$.
\end{prop}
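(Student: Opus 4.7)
The plan is to classify the $H$-orbits on the flag variety $G/P$, identified with the set of flags $\mathcal{F}=(0\subset V_1\subset \cdots \subset V_r=V)$ of right $\D$-subspaces with $\dim_\D V_i=n_1+\cdots+n_i$; then the representatives $w_s^{-1}$ of $G/P$ modulo $H$ translate to representatives $w_s$ of $P\backslash G/H$. The key is to build a bijection between these $H$-orbits and $I(\overline{n})$ using the invariants $n_{i,j}=\dim_\D S_{i,j}$ already introduced.

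First, I would verify that the matrix $(n_{i,j})$ attached to a flag $\mathcal{F}$ lies in $I(\overline{n})$. Symmetry $n_{i,j}=n_{j,i}$ is built into the construction via $S_{j,i}=S_{i,j}.\e$; the diagonal entries $n_{i,i}$ are even since $S_{i,i}$ is chosen to be $\rho(\e)$-stable, hence a right $D$-vector space, of $\D$-dimension twice its $D$-dimension; the row sums are $n_i$ because $V_i=\bigoplus_{k\le i}\bigoplus_{l=1}^r S_{k,l}$. I would also check that $n_{i,j}$ does not depend on the choice of the complements $S_{i,j}$: only the dimension of $(V_i\cap V_j.\e)/(V_i\cap V_{j-1}.\e+V_{i-1}\cap V_j.\e)$ enters.

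Second, I would prove that $(n_{i,j})$ is a complete $H$-invariant. Any $h\in H$ lies in $\mathrm{End}(V)_D$, hence commutes with $\rho(\e)$, so it sends a valid system of $S_{i,j}$'s for $\mathcal{F}$ to a valid system for $h\mathcal{F}$; this shows the invariants are $H$-stable. Conversely, given two flags $\mathcal{F},\mathcal{F}'$ with equal invariants, the recipe sketched in the text produces an element of $H$ carrying one to the other: for $i<j$ pick any $u_{i,j}\in\mathrm{Iso}(S_{i,j},S'_{i,j})_\D$ and set $u_{j,i}=\rho(\e)\circ u_{i,j}\circ\rho(\e)^{-1}$, so that $u_{i,j}\oplus u_{j,i}$ commutes with $\rho(\e)$; for $i=j$ pick a $D$-linear $u_{i,i}\in\mathrm{Iso}(S_{i,i},S'_{i,i})_D$, which exists because both spaces have equal $D$-dimension $m_{i,i}$. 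The direct sum of the $u_{i,j}$ is then an element of $H$.

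Third, I would verify that every $s\in I(\overline{n})$ is realized. The claim is that for $V_i=w_s^{-1}(V_i^0)$ one has $\dim_\D S_{i,j}=n_{i,j}$; this reduces to a direct bookkeeping using $\rho(\e)(\mathrm{Vect}_\D(u_k))=\mathrm{Vect}_\D(u_{n+1-k})$ and the blockwise definition of $w_s$. The permutation was designed precisely so that the basis vectors $u_k$ that fall simultaneously in $V_i$ and in $V_j.\e$ (modulo the smaller pieces) are exactly $n_{i,j}$ in number. This is the main obstacle: the conceptual content is elementary, but the combinatorial verification that the explicit $w_s$ produces the prescribed profile $(n_{i,j})$ requires careful index-chasing. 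Finally, the remark that the bijection is independent of $D$ is automatic, because the invariants $(n_{i,j})$ and the permutations $w_s$ are defined purely in terms of the $\D$-linear flag data and the involution $\rho(\e)$, and this data is the same whether one works with a division algebra of index $d=2\delta$ or with the quaternion algebra $\mathbb{H}$.
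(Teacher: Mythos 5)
Your proposal follows essentially the same route as the paper: classify $H$-orbits on the flag variety via the $\D$-dimension profile $(n_{i,j})$ of the spaces $S_{i,j}$, show this profile is a complete invariant using the fact that $H$-linearity is equivalent to commutation with $\rho(\e)$, and exhibit the explicit permutation representatives $w_s$ realizing each profile in $I(\overline{n})$. You supply one useful detail the paper leaves implicit (why $n_{i,j}$ is well-defined independently of the choice of complements), and like the paper you defer the combinatorial check that $w_s$ produces the profile $s$ to a routine but unilluminating index verification.
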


\begin{rem}\label{indep1}
We will see in Section \ref{odd}, Remark \ref{indep2}, that the set of representatives of $P\backslash G /H$ above "naturally" embeds as a subset of $P_{\overline{n}}(E)\backslash GL(n,E)/GL(n,F)$.
\end{rem}

We recall that $\sigma$ is the involution $int_{s_\e} \circ \theta^{-1}$, so that $H=G^\sigma$. Notice that 
$\theta^{-1}$ fixes $\mathcal{M}(n,F)$, hence $w_s$. For 
$s\in I(\overline{n})$, we denote by $t_s=w_s\sigma(w_s^{-1})s_\e=w_s s_\e w_s^{-1}$, which is a monomial matrix 
(in $N_{G}({M_{\emptyset}})$). We denote 
by $\tau_s$ the element of order $2$, which is the image of $t_s$ in $\mathfrak{S}_n=N_{G}({M_{\emptyset}})/{M_{\emptyset}}$, in fact as a permutation matrix, one has $\tau_s=w_sww_s^{-1}$. With these choices, 
the group $w_s H w_s^{-1}$ is the fixed points of the involution 
$$\sigma_s:x\mapsto t_s \theta^{-1}(x)t_s^{-1}=\theta^{-1}(t_s x t_s^{-1}).$$
We want explicit information about $\tau_s$. We write $I=\llbracket 1,\dots, n \rrbracket$ as the ordered (in the 
sense that the elements in one set are smaller than those in the set written after)
disjoint union 
$$I=I_{1,1}\cup I_{1,2}\cup \dots \cup I_{1,r} \cup \dots \cup I_{r,1}\cup \dots \cup I_{r,r-1}\cup I_{r,r},$$ 
with $I_{i,j}$ of length $n_{i,j}$. Then one checks that $\tau_s$ is the involution of $I$, which stabilises each $I_{i,i}$, acting on it as the symmetry with respect to its midpoint, and which stabilises $I_{i,j}\cup I_{j,i}$ (for $i<j$) and acts 
on this union of intervals as the symmetry with center the midpoint of the interval joining the left end of $I_{i,j}$ and the right 
end of $I_{j,i}$. Any $s\in I(\overline{n})$ can be interpreted as a sub-partition (taking only the nonzero 
$n_{i,j}$'s) of $(n_1,\dots,n_t)$, and we write 
$P_s=M_sN_s$ the corresponding standard parabolic subgroup (and its standard decomposition) of $G$ contained in $P$. We follow 
\cite{M11} and \cite{JLR} to study the group $P\cap w_s H w_s^{-1}$.  First, 
as $\tau_s$ exchanges the intervals $I_{i,j}$ and $I_{j,i}$ for $j\neq i$, and stabilises the intervals $I_{i,i}$, a straightforward analogue of Remark 3.1 of \cite{M11} gives the following lemma (where $P=MN$ is the standard decomposition of $P$). 

\begin{LM}\label{boozer}
For $s\in I(\overline{n})$, one has $\tau_s(\Phi_M^{-})\subset \Phi^-$, $\tau_s(\Phi_M^{+})\subset \Phi^+$.
\end{LM}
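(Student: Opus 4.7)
The plan is to verify the lemma by a direct combinatorial check on the permutation $\tau_s\in\mathfrak{S}_n$, leveraging the explicit description of $\tau_s$ supplied in the paragraph immediately preceding it.

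First, I would note a symmetry reduction: since $\tau_s$ acts on roots by the formula $\tau_s\cdot(e_k-e_l)=e_{\tau_s(k)}-e_{\tau_s(l)}$, one has $\tau_s(-\alpha)=-\tau_s(\alpha)$, and consequently the two inclusions $\tau_s(\Phi_M^{\pm})\subset\Phi^{\pm}$ are equivalent. Hence I would focus on $\tau_s(\Phi_M^+)\subset\Phi^+$ and deduce the negative case by negation.

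Next, I would unpack the description of the root sets concretely. One has $\Phi^+=\{e_k-e_l:k<l\}$, and $\Phi_M^+$ consists of those $e_k-e_l$ with $k<l$ lying in a common partition block $B_i=I_{i,1}\cup\dots\cup I_{i,r}$ of $\overline{n}$. Given such a root $\alpha=e_k-e_l\in\Phi_M^+$, I would locate $k\in I_{i,j_1}$ and $l\in I_{i,j_2}$ in the finer decomposition of $I$ into the $I_{i,j}$'s, with $j_1\leq j_2$. The explicit description of $\tau_s$ then says that $\tau_s$ exchanges the interval $I_{i,j}$ with $I_{j,i}$ (reversing the order inside each), so $\tau_s(k)\in I_{j_1,i}$ and $\tau_s(l)\in I_{j_2,i}$.

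At this point the verification reduces to comparing the positions of the target intervals $I_{j_1,i}$ and $I_{j_2,i}$ within the lex ordering $I=I_{1,1}\cup\dots\cup I_{r,r}$, and concluding that $\tau_s(k)<\tau_s(l)$, i.e.\ $\tau_s(\alpha)\in\Phi^+$. In the generic case $j_1<j_2$ this is immediate: the interval $I_{j_1,i}$ strictly precedes $I_{j_2,i}$ (its first index is smaller), so the target indices automatically satisfy $\tau_s(k)<\tau_s(l)$, independently of where $k$ and $l$ sit inside their respective source intervals.

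The main subtlety, and the main obstacle I anticipate, is the remaining sub-case $j_1=j_2$: here both targets land inside a single interval $I_{j_1,i}$ with the internal order reversed by $\tau_s$. The author invokes this lemma as a direct analogue of Remark 3.1 of \cite{M11}, where precisely the same kind of manipulation is carried out in the Galois setting, so the plan would be to transfer that combinatorial resolution verbatim, relying on the structure of $\tau_s$ within a single $I_{i,j}$-block together with the parity constraint $n_{i,i}\in 2\mathbb{N}$ imposed on elements of $I(\overline n)$. Once this sub-case is disposed of, the combination with the earlier observations completes the verification of both inclusions.
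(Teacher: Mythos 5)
You correctly reduce the claim to $\tau_s(\Phi_M^+)\subset\Phi^+$ and correctly dispose of the sub-case $j_1<j_2$: since $\tau_s(k)\in I_{j_1,i}$, $\tau_s(l)\in I_{j_2,i}$, and $I_{j_1,i}$ precedes $I_{j_2,i}$ in the lexicographic ordering, one has $\tau_s(k)<\tau_s(l)$ regardless of positions inside the source intervals. However, the sub-case $j_1=j_2$ that you flag as ``the main obstacle I anticipate'' is not merely subtle --- it is where the lemma, read literally, fails. By the description of $\tau_s$ given just before the lemma, $\tau_s$ maps each $I_{i,j}$ ($i\neq j$) to $I_{j,i}$ by an order-\emph{reversing} bijection, and acts on each $I_{i,i}$ as the order-reversing central symmetry. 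Hence if $k<l$ both lie in a single $I_{i,j}$, then $\tau_s(k)>\tau_s(l)$ and $\tau_s(e_k-e_l)\in\Phi^-$, even though $e_k-e_l\in\Phi_M^+$ (indeed $e_k-e_l\in\Phi_{M_s}^+$). A concrete counterexample: take $\overline{n}=(1,3)$ and $s$ with $n_{1,1}=0$, $n_{1,2}=n_{2,1}=1$, $n_{2,2}=2$; then $\tau_s=(1\,2)(3\,4)$ and $\alpha=e_3-e_4\in\Phi_M^+$ (both indices lie in the second $\overline{n}$-block) is sent to $e_4-e_3\in\Phi^-$.

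Consequently, the plan to transfer Remark 3.1 of \cite{M11} ``verbatim'' cannot succeed: in the Galois setting of \cite{M11} (and in Section 4 of the present paper, for $d$ odd) the relevant permutation swaps $I_{i,j}$ with $I_{j,i}$ \emph{preserving} the internal order and fixes $I_{i,i}$ pointwise, so the orientation reversal responsible for the failure here simply does not arise; and the parity constraint $n_{i,i}\in 2\N$ does nothing to repair the $j_1=j_2$ sub-case. What saves the subsequent argument is that the lemma is invoked, in the proposition that follows, only for $\alpha$ lying in $\Phi_M^-\setminus\Phi_{M_s}^-$, i.e.\ exactly the $j_1\neq j_2$ situation you already handle. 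So the lemma should be understood with $\Phi_M^{\pm}\setminus\Phi_{M_s}^{\pm}$ in place of $\Phi_M^{\pm}$; with that correction your ``generic case'' computation is in fact the complete proof, and the appeal to \cite{M11} is unnecessary.
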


We now obtain the following decomposition of $P\cap w_s H w_s^{-1}$.

\begin{prop}
For any $s\in I(\overline{n})$, one has $P\cap w_s H w_s^{-1}=P_s\cap w_s H w_s^{-1}$, and 
$P_s\cap w_s H w_s^{-1}$ is the semi direct product of $M_s\cap w_s H w_s^{-1}$ and $N_s\cap w_s H w_s^{-1}$.
\end{prop}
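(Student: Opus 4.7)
The plan is to analyse both claims at the level of roots, using Lemma~\ref{boozer} and the combinatorial action of $\tau_s$ on the partition $\{I_{i,j}\}$ as the main tools.

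For the first equality, the inclusion $P_s\cap w_sHw_s^{-1}\subseteq P\cap w_sHw_s^{-1}$ is immediate. Conversely, any $p\in P\cap G^{\sigma_s}$ automatically lies in $P\cap\sigma_s(P)=P\cap t_sPt_s^{-1}$, so it suffices to show this intersection is contained in $P_s$. The roots of $P$ not lying in $P_s$ are exactly those in $\Phi_M^-\setminus\Phi_{M_s}^-$. For such a root $\alpha$, Lemma~\ref{boozer} gives $\tau_s(\alpha)\in\Phi^-$; and a direct verification using that $\tau_s$ permutes the $I_{i,j}$ (swapping $I_{i,j}\leftrightarrow I_{j,i}$ for $i\neq j$ and stabilising $I_{i,i}$) shows that $\Phi_M\cap\tau_s(\Phi_M)=\Phi_{M_s}$, so $\tau_s(\alpha)\notin\Phi_M$. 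Hence $\tau_s(\alpha)\in\Phi^-\setminus\Phi_M^-$, which is not a root of $P$. This forces the components of $p$ along such roots to vanish, so $p\in P_s$.

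For the semi-direct product decomposition, I would take $p=m_sn_s\in P_s\cap G^{\sigma_s}$ and prove that $\sigma_s(m_s)=m_s$ and $\sigma_s(n_s)=n_s$ separately. Since $\tau_s$ permutes the blocks $I_{i,j}$ defining $M_s$, one has $\sigma_s(M_s)=M_s$, so $\sigma_s(m_s)\in M_s$. The heart of the argument is then to show $\sigma_s(n_s)\in N_s$: once this is established, uniqueness of the Levi decomposition in $P_s=M_sN_s$ applied to the equation $\sigma_s(m_s)\sigma_s(n_s)=m_sn_s=p$ yields the desired equalities. To see $\sigma_s(n_s)\in N_s$, I would verify that the diagonal $P_s$-blocks of $\sigma_s(n_s)$ are the identity (because $\tau_s$ permutes the $I_{i,j}$ among themselves and $n_s$ is unipotent on each diagonal block of $P_s$), and that the strictly-lower $P_s$-blocks of $\sigma_s(n_s)$ all vanish: these correspond under $\tau_s^{-1}$ to upper $\Phi_{N_s}$-components of $n_s$ which are themselves forced to be zero by the $\sigma_s$-invariance of $p$, via the same root-level argument as in the first part applied now to the roots $\Phi_{N_s}$.

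The main obstacle will be this last point: $\sigma_s$ does not preserve $N_s$ in general, so showing that the specific $n_s$ arising from a $\sigma_s$-fixed element $p\in P_s$ satisfies $\sigma_s(n_s)\in N_s$ requires a careful root-by-root analysis combining Lemma~\ref{boozer} with the root constraints derived from the first claim.
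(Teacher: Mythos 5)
Your treatment of the first equality is essentially identical to the paper's: both pass to $P\cap t_sPt_s^{-1}$, reduce to roots in $\Phi_M^-\setminus\Phi_{M_s}^-$, and use Lemma~\ref{boozer} together with $\Phi_M\cap\tau_s(\Phi_M)=\Phi_{M_s}$ to conclude $\tau_s(\alpha)\in\Phi^-\setminus\Phi_M$, hence not a root of $P$. That part is fine.

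For the semi-direct product, your overall scaffolding is correct, and the paper uses the same endgame: establish that both factors of $p=m_sn_s$ are separately $\sigma_s$-fixed by comparing Levi decompositions in $P_s=M_sN_s$. Your observations that $\sigma_s(M_s)=M_s$, hence $\sigma_s(m_s)\in M_s$, and that the block-diagonal part of $\sigma_s(n_s)$ is the identity, are both correct. The gap is in the justification for why the strictly lower $P_s$-blocks of $\sigma_s(n_s)$ vanish. You propose to show that certain \emph{upper} $N_s$-components of $n_s$ itself are zero, ``forced to be zero by the $\sigma_s$-invariance of $p$, via the same root-level argument as in the first part.'' But the $\sigma_s$-invariance of $p$ does not directly constrain individual components of the factor $n_s$ in a non-abelian unipotent group, and the first part's root computation ($\Phi_M\cap\tau_s(\Phi_M)=\Phi_{M_s}$) says nothing about which $N_s$-entries of $n_s$ must vanish; those entries are in general free. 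Making your version work would require an inductive argument along root heights which you neither carry out nor cite, and which is in any case unnecessary. The observation you are missing, which is what the paper does, is that there is no need to say anything about the entries of $n_s$: since $\sigma_s(m_s)\in M_s$ and $\sigma_s(p)=p\in P_s$, one has $\sigma_s(n_s)=\sigma_s(m_s)^{-1}p\in M_s\cdot P_s=P_s$, so $\sigma_s(n_s)$ is automatically block upper triangular and its strictly lower $P_s$-blocks vanish for free. Equivalently, in the paper's phrasing, $t_s^{-1}n_st_s\in P\cap t_s^{-1}N_st_s\subset N_s$, the containment $P\cap t_s^{-1}N_st_s\subset N_s$ being the root-theoretic input (which follows from $\tau_s(\Phi_{M_s})=\Phi_{M_s}$ together with the first part). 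Combined with the identity block diagonal you already established, this gives $\sigma_s(n_s)\in N_s$ directly, and the Levi uniqueness argument you describe then finishes the proof.
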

\begin{proof}
It is enough to see that $P\cap t_sP t_s^{-1}\subset P_s$, as $P\cap w_s H w_s^{-1}\subset P\cap t_s P t_s^{-1}$ ($w_s H w_s^{-1}$ 
being the fixed points of the involution $\sigma_s: g\mapsto t_s\theta^{-1}(g) t_s^{-1}$). In fact it is enough to show 
that the intersection $Lie(P)\cap t_s Lie(P) t_s^{-1}\subset Lie(P_s)$ (take the invertible elements to go back to the groups). 
Decomposing $Lie(P)$ as $(Lie(N_s^-)\cap Lie(M))\oplus Lie (P_s)$, as $t_s$ is a Weyl element, one has 
$$Lie(P)\cap t_s Lie(P) t_s^{-1}=(Lie(N_s^-)\cap Lie(M)\cap t_s Lie(P) t_s^{-1})\oplus (Lie (P_s)\cap t_s Lie(P) t_s^{-1}).$$ Hence 
it is sufficient to see that $Lie(N_s^-)\cap Lie(M)\cap t_s Lie(P) t_s^{-1}=\{0\},$ or what is equivalent, that 
$t_s^{-1}(Lie(N_s^-)\cap Lie(M))t_s\cap Lie(P)=\{0\}$, and it is in fact enough to replace $Lie(N_s^-)\cap Lie(M)$ by 
$Lie(N_\alpha)$ for $\alpha$ in $\Phi^- -\Phi_{M_s}^-$ in the previous equality, again because $t_s$ is a Weyl element. Now 
$\tau_s(\alpha)$ is not in $\Phi_M$, because otherwise it would be in $\Phi_{M_s}=\Phi_M\cap \tau_s(\Phi_M)$, and thanks to 
Lemma \ref{boozer}, $\tau_s(\alpha)$ is in $\Phi^-$, so it is in $\Phi^- -\Phi_M$, hence 
$t_s Lie(N_\alpha) t_s^{-1} \cap  Lie(P)=\{0\}$. Now if 
$p\in P_s\cap  w_s H w_s^{-1}$, and write it $p_s=m_s n_s$ with $m_s\in M$ and $n_s\in N$, then 
$\theta^{-1} (m_s)\theta^{-1} (n_s)=\theta^{-1} (p_s)=t_s^{-1} p_s t_s=(t_s^{-1} m_s t_s)(t_s^{-1} n_s t_s)$. But 
$t_s$ normalises $M_s$, in particular $P\cap t_s^{-1} N_s t_s \subset N_s$, thus
 $\theta^{-1}(m_s)=t_s^{-1} m_s t_s$ and $\theta^{-1}(n_s)=t_s^{-1} n_s t_s$, and this ends the proof of the proposition.
\end{proof}    
       
The group $M_s^{\sigma_s}=M_s\cap w_s Hw_s^{-1}$ is explicitly described as follows: 
an element $m\in M_s^{\sigma_s}$ is of the form 
$$a=diag(a_{1,1},a_{1,2},\dots,a_{r,r-1},a_{r,r}),$$ 
with $a_{i,i}\in G_{n_{i,i}}(\D)$ satisfying $\theta(a_{i,i})=s_\e a_{i,i}s_{\e}^{-1}$ (here $s_\e=s_{\e,n_{i,i}}$), $a_{i,j}\in G_{n_{i,j}}(\D)$ satisfying 
$\theta(a_{i,j})=\e^2. w  a_{i,j} w^{-1} .\e^{-2}$ if $i<j$, where $w$ is the anti-diagonal matrix with ones on the second diagonal, hence $\theta(a_{i,j})=w a_{i,j} w^{-1}$ if $i>j$ (remember that $\theta$ is not an involution, but that $\theta^2=int_{\e^2.I_n}$).\\

Exactly as in the proof of Proposition 4.4 of \cite{M11} (which is itself up to notational modifications Proposition 2.2 of \cite{JLR}), 
one has the following equality.

\begin{prop}\label{modulus}
$(\d_{P_s^{\sigma_s}})_{|M_s^{\sigma_s}}=(\d_{P_s}^{1/2})_{|M_s^{\sigma_s}}.$
\end{prop}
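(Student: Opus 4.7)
The plan is to transpose the computation of Proposition 2.2 of \cite{JLR} (equivalently Proposition 4.4 of \cite{M11}) to this inner-form setting; the previous proposition provides exactly the Levi decomposition $P_s^{\sigma_s}=M_s^{\sigma_s}\ltimes N_s^{\sigma_s}$ needed to run the argument. For $m\in M_s^{\sigma_s}$ one has
$$\d_{P_s^{\sigma_s}}(m)=\bigl|\det\bigl(\mathrm{Ad}(m)_{|Lie(N_s^{\sigma_s})}\bigr)\bigr|_F,$$
so I would compute this through the root-space decomposition $Lie(N_s)=\bigoplus_{\alpha\in\Phi_{N_s}^+}N_\alpha$ and compare it with $\d_{P_s}(m)=|\det(\mathrm{Ad}(m)_{|Lie(N_s)})|_E$; the shift from $|\cdot|_E$ to $|\cdot|_F$ is what produces the square root.

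The differential of $\sigma_s$ intertwines the action of $M_\emptyset$ through $\tau_s$, so it sends $N_\alpha$ onto $N_{\tau_s(\alpha)}$. From the combinatorial description of $\tau_s$ preceding Lemma \ref{boozer}, $\tau_s$ permutes the blocks of $M_s$ and hence stabilises $\Phi_{N_s}^+$, which therefore partitions into $\tau_s$-orbits of size $1$ or $2$. I would compute the contribution of each orbit separately. For a size-$2$ orbit $\{\alpha,\tau_s(\alpha)\}$, $\sigma_s$ swaps $N_\alpha$ and $N_{\tau_s(\alpha)}$ by an $F$-linear isomorphism, and its graph is the $F$-fixed subspace of $N_\alpha\oplus N_{\tau_s(\alpha)}$, of $F$-dimension $\dim_F N_\alpha$. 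For a fixed root $\alpha=\tau_s(\alpha)$, the space $N_\alpha$ is $\sigma_s$-stable and $\sigma_s$ acts on it $E$-semi-linearly (because $\theta^{-1}$ restricts to the non-trivial element of $\mathrm{Gal}(E/F)$ on $E$ while $\mathrm{Ad}(t_s)$ is $E$-linear); Hilbert 90 then gives $\dim_F(N_\alpha)^{\sigma_s}=\dim_E N_\alpha=\tfrac12\dim_F N_\alpha$, and moreover $\alpha(m)\in F^\times$ (since the fixed-point condition forces $\theta^{-1}(\alpha(m))=\alpha(m)$).

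The key identity that glues everything together is $|\alpha(m)|_E=|\tau_s(\alpha)(m)|_E$ for $m\in M_s^{\sigma_s}$: this follows from the defining relation $\theta(m)=t_sm t_s^{-1}$ and the fact that $\theta=\mathrm{int}_\e$ preserves $|\cdot|_E$. Combining this identity with the relation $|\cdot|_E=|\cdot|_F^{[E:F]}$ on $F^\times$ and with the dimension counts above, one checks orbit-by-orbit that the contributions to $\d_{P_s^{\sigma_s}}(m)$ and to $\d_{P_s}(m)^{1/2}$ coincide, giving the equality claimed in the proposition. The delicate point I anticipate is the fixed-root case — verifying carefully that $\sigma_s$ is genuinely $E$-semi-linear on $N_\alpha$ so that the Hilbert-90 half-dimension argument applies — but this is immediate from the explicit formula $\sigma_s=\theta^{-1}\circ\mathrm{int}_{t_s}$ established in Section \ref{prelim}.
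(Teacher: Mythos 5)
The overall strategy --- decompose $Lie(N_s)$ into root spaces $N_\alpha$, compute the modulus characters as determinants of the adjoint action, and exploit the passage from $|\cdot|_E$ to $|\cdot|_F$ together with the identity $|\alpha(m)|_E=|\tau_s(\alpha)(m)|_E$ --- is indeed the content of Proposition 2.2 of \cite{JLR} (equivalently Proposition 4.4 of \cite{M11}), which is all the paper cites for this statement. But there is a genuine gap in the combinatorics: you assert that $\tau_s$ \emph{stabilises} $\Phi_{N_s}^+$, and this is false. What is true is that $\tau_s$ stabilises $\Phi_{M_s}$, hence stabilises the set $\Phi_{N_s}=\Phi_{N_s}^+\cup\Phi_{N_s}^-$; but $\tau_s$ does not preserve positivity on $\Phi_{N_s}$ in general. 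Concretely, if $k\in I_{ab}$ and $l\in I_{a'b'}$ with $k<l$, then $\tau_s(k)\in I_{ba}$, $\tau_s(l)\in I_{b'a'}$, and whenever $a<a'$ while $b>b'$ one gets $\tau_s(k)>\tau_s(l)$, so $\tau_s(\alpha_{kl})\in\Phi_{N_s}^-$. (The simplest instance is a root with one index in $I_{ij}$ and the other in $I_{ji}$ for $i<j$, where $\tau_s(\alpha)=-\alpha$.) Because of this, $\Phi_{N_s}^+$ does \emph{not} partition into $\tau_s$-orbits of size one or two inside $\Phi_{N_s}^+$, and your orbit-by-orbit bookkeeping as written is missing a class of roots.

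The repair is short and is presumably implicit in \cite{JLR}: the roots $\alpha\in\Phi_{N_s}^+$ with $\tau_s(\alpha)\in\Phi_{N_s}^-$ come either singly with $\tau_s(\alpha)=-\alpha$, or in pairs $\{\alpha,\beta\}$ with $\tau_s(\alpha)=-\beta$ and $\tau_s(\beta)=-\alpha$. Such roots contribute \emph{nothing} to $Lie(N_s^{\sigma_s})$, since $d\sigma_s$ maps $N_\alpha$ into $N_{\tau_s(\alpha)}\subset Lie(N_s^-)$, so no nonzero fixed vector lies in $Lie(N_s)$. On the other side, the identity $|\alpha(m)|_E=|\tau_s(\alpha)(m)|_E$ for $m\in M_s^{\sigma_s}$ forces the contribution of $\alpha$ (in the anti-fixed case), or of $\alpha\cdot\beta$ (in the paired case), to $\d_{P_s}(m)$ to be exactly $1$. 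So these roots contribute trivially to both $\d_{P_s^{\sigma_s}}$ and $\d_{P_s}^{1/2}$, and only the pairs $\{\alpha,\tau_s(\alpha)\}$ that do lie entirely in $\Phi_{N_s}^+$ carry information; for those your dimension count and normalisation of absolute values are correct. One further remark: in the $d$-even situation the diagonal intervals $I_{i,i}$ have even length $n_{i,i}=2m_{i,i}$, so $\tau_s$ has no fixed point on $\{1,\dots,n\}$ and hence there are no $\tau_s$-fixed roots at all; the Hilbert-90 paragraph is therefore vacuous here (it would be needed in the $d$-odd analogue, Proposition \ref{modulus2}). So: right skeleton, but the treatment of the $\tau_s$-action on $\Phi_{N_s}^+$ needs to account for the roots that $\tau_s$ pushes into $\Phi^-$.
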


\subsection{Distinguished Steinberg representations}\label{dist1}

Let $\chi$ be a character $E^*$, we denote by $\tilde{\chi}=\chi\circ N_{rd,E}$ the associated character of $\D^*$. We denote by 
$\mu_{\chi}$ the character $\chi\circ N_{rd,G}$ (and its restriction to any subgroup of $G$), in particular 
${\mu_{\chi}}_{|M_\emptyset}$ is the character $\tilde{\chi}\otimes \dots \otimes \tilde{\chi}$. We will use several times the following fact, which is a consequence of Propositions \ref{Frob} and \ref{filtr}. 

\begin{prop}\label{mackey}
Let $P$ be a standard parabolic subgroup of $G$ corresponding to a partition $n$. 
Suppose that $ind_P^G(\mu_\chi)$ is $H$-distinguished, 
then there is $s\in I(\overline{n})$ such that 
$${\mu_\chi}_{|M_s^{\sigma_s}}=(\d_{P_s^{\sigma_s}})_{|M_s^{\sigma_s}}=(\d_{P_s}^{1/2})_{|M_s^{\sigma_s}},$$ 
\end{prop}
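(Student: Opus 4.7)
The plan is a standard Mackey--Frobenius analysis, all of whose substantive geometric input has already been established in the preceding subsections.

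First, by Proposition \ref{representatives} the set $\{w_s : s \in I(\overline{n})\}$ is a system of representatives for $P \backslash G / H$. Ordering these representatives so that the partial unions $X_i = \coprod_{k \leq i} P w_{s_k} H$ are open in $G$, Proposition \ref{filtr} yields an $H$-stable filtration of $ind_P^G(\mu_\chi)$ whose successive quotients are the $H$-modules $\mathcal{C}_c^\infty(P \backslash P w_s H, \mu_\chi)$ for $s \in I(\overline{n})$.

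Next, a nonzero $H$-invariant linear form on $ind_P^G(\mu_\chi)$ restricts non-trivially to $\mathcal{C}_c^\infty(P \backslash X_i, \mu_\chi)$ for some minimal index $i$, and by minimality vanishes on $\mathcal{C}_c^\infty(P \backslash X_{i-1}, \mu_\chi)$, hence factors through the quotient to produce a nonzero $H$-invariant linear form on $\mathcal{C}_c^\infty(P \backslash P w_s H, \mu_\chi)$ for some $s \in I(\overline{n})$. So it suffices to analyze when such a subquotient can be $H$-distinguished.

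To apply Proposition \ref{Frob}, I would right-translate by $w_s^{-1}$ to identify this subquotient, as an $H$-module, with the compactly induced representation $ind_{w_s^{-1} P w_s \cap H}^H(\mu_\chi \circ int_{w_s})$. Applying Frobenius reciprocity, and using that $H = GL(m,D)$ is unimodular, the existence of a nonzero $H$-invariant linear form translates into a character equation on $w_s^{-1} P w_s \cap H$; conjugating back by $w_s$, this amounts to an equality between $\mu_\chi$ and $\delta_{P \cap w_s H w_s^{-1}}$ on $P \cap w_s H w_s^{-1}$.

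Finally, the proposition preceding Proposition \ref{modulus} gives $P \cap w_s H w_s^{-1} = P_s^{\sigma_s}$, and since $\mu_\chi = \chi \circ N_{rd,G}$ factors through the reduced norm, it is trivial on the unipotent radical $N_s \cap w_s H w_s^{-1}$; so the character equality descends to one on $M_s^{\sigma_s}$, which combined with the second equality of Proposition \ref{modulus} gives the statement. The proof is essentially bookkeeping; the only point requiring a little care is the tracking of the modular character through Frobenius reciprocity. All the hard content---the parametrization of $P \backslash G / H$, the structure of $P \cap w_s H w_s^{-1}$, and the modulus comparison---has been established in the preceding subsections, so there is no substantive obstacle here.
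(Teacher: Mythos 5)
Your proof is correct and follows exactly the route taken in the paper: the Mackey filtration of Proposition \ref{filtr} over the representatives of Proposition \ref{representatives}, Frobenius reciprocity (Proposition \ref{Frob}) together with unimodularity of $w_sHw_s^{-1}$, the identification $P\cap w_sHw_s^{-1}=P_s^{\sigma_s}$ from the preceding proposition, and the modulus comparison of Proposition \ref{modulus}. You simply spell out intermediate steps (the translation by $w_s$, the triviality on the unipotent radical) that the paper's terse one-paragraph proof leaves implicit.
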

\begin{proof}
According to Proposition \ref{filtr}, the induced representation $ind_P^G(\mu_\chi)$ possesses a filtration, each sub-quotient of 
which is of the form $\mathcal{C}_c^\infty(P\backslash Pw_s H)\simeq ind_{P^{\sigma_s}}^{G^{\sigma_s}}(\mu_\chi)$ for some 
$s\in I(\overline{n})$. A nonzero $H$-invariant linear form on $ind_P^G(\mu_\chi)$ must thus induce 
a nonzero $H$-invariant linear form on one of these sub-quotients. As $P^{\sigma_s}$ is equal to $P_s^{\sigma_s}$, and $G^{\sigma_s}$ is unimodular, the statement is a consequence 
of Proposition \ref{Frob}.  
\end{proof}

 We denote by $St(\chi)$ the Steinberg representation 
$ind_{P_\emptyset}^G(\mu_\chi)/S$, where $S=\sum_P ind_P^G(\mu_\chi)$, where the parabolic subgroups in the sum correspond to a partition 
$\overline{n}$ of $n$, with all $n_i$'s equal to $1$, except one of them which is $2$. First, we observe that if $St(\chi)$ is $H$-distinguished, then one has $\overline{\chi}=\chi^{-1}$.

\begin{prop}\label{2poss}
If $St(\chi)$ is $H$-distinguished, then $ind_{\B}^G(\mu_\chi)$ is distinguished, and this implies that 
$\overline{\chi}=\chi^{-1}$, i.e. $\chi_{|F^*}=\1$ or $\eta_{E/F}$. Moreover, only the open orbit 
$P_\emptyset H$ supports an $H$-invariant linear form, and $Hom_H(St(\chi),\1)$ is of dimension $\leq 1$.
\end{prop}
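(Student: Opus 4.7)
The plan is to pull $H$-distinction from $St(\chi)$ back to $ind_{\B}^G(\mu_\chi)$ and then analyse it via the Mackey filtration. Since $St(\chi)$ is a quotient of $ind_{\B}^G(\mu_\chi)$, any nonzero $L \in Hom_H(St(\chi),\1)$ lifts to a nonzero element of $Hom_H(ind_{\B}^G(\mu_\chi),\1)$, so the latter is $H$-distinguished. Applying Proposition \ref{mackey} to $P = \B$ with $\overline{n} = (1,\dots,1)$ then produces some $s \in I(\overline{n})$ with
$$\mu_\chi|_{M_\emptyset^{\sigma_s}} = \delta_{\B}^{1/2}|_{M_\emptyset^{\sigma_s}}.$$
For this partition, $I(\overline{n})$ parameterises fixed-point-free involutions $\tau_s$ of $\{1,\dots,n\}$, and on $M_\emptyset \simeq (\D^*)^n$ the involution $\sigma_s$ couples coordinates: for a pair $(i,j)=(i,\tau_s(i))$ with $i<j$, being fixed by $\sigma_s$ forces $a_j$ to be a conjugate of $\theta^{-1}(a_i)$, and the conjugating element (a power of $\e$) does not affect $N_{rd,E}$. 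Since $\theta^{-1}$ restricts to Galois conjugation on the centre $E$ of $\D$, one has $\tilde\chi(\theta^{-1}(a)) = \widetilde{\overline{\chi}}(a)$, and therefore
$$\mu_\chi|_{M_\emptyset^{\sigma_s}}(a) = \prod_{i<\tau_s(i)} \widetilde{\chi\overline{\chi}}(a_i).$$

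Next, a direct computation of the adjoint action on $Lie(N_\emptyset)$ gives $\delta_{\B}(\mathrm{diag}(a_1,\dots,a_n)) = \prod_i \nu_E(a_i)^{\d(n-2i+1)}$, so on $M_\emptyset^{\sigma_s}$ (where $\nu_E(a_{\tau_s(i)}) = \nu_E(a_i)$) one finds
$$\delta_{\B}^{1/2}|_{M_\emptyset^{\sigma_s}}(a) = \prod_{i<\tau_s(i)} \nu_E(a_i)^{\d(n-i-\tau_s(i)+1)}.$$
Using that $N_{rd,E}\colon \D^*\to E^*$ is surjective, the equality of the two characters decouples into one identity per pair, namely $\chi\overline{\chi} = |\cdot|_E^{\d(n-i-\tau_s(i)+1)}$ on $E^*$. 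Since $\chi\overline{\chi}$ is a single character, all exponents must coincide; as the values $i+\tau_s(i)$ sum to $n(n+1)/2$ over the $n/2$ pairs, this forces $i+\tau_s(i) = n+1$ for every $i$. Thus $\tau_s$ must be the anti-diagonal involution $i \leftrightarrow n+1-i$, and the condition becomes $\chi\overline{\chi} = \1$. Via $\chi(z)\chi(\bar z) = \chi(N_{E/F}(z))$ and local class field theory ($F^*/N_{E/F}(E^*) \simeq \Z/2\Z$), this is equivalent to $\chi|_{F^*} \in \{\1,\eta_{E/F}\}$. One checks that this unique $s$ represents the open orbit: since $s_\e$ is anti-diagonal, $\sigma(\B) = \B^-$, so $\B$ is itself $\sigma$-split and Proposition \ref{bigcell} shows $\B H$ is open, represented by $w_s=I$ for which $\tau_s$ is anti-diagonal.

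For the multiplicity bound, the same computation shows that no non-open sub-quotient in the filtration of Proposition \ref{filtr} can support a nonzero $H$-invariant linear form. Since the open orbit contributes a sub-representation $\mathcal{C}_c^\infty(\B\backslash\B H,\mu_\chi) \subset ind_{\B}^G(\mu_\chi)$, any $H$-invariant form vanishing there would descend to the complementary quotient and induce a nonzero form on some non-open sub-quotient, a contradiction. Hence restriction to this sub-representation is injective on $Hom_H(ind_{\B}^G(\mu_\chi),\1)$, and Proposition \ref{Frob} bounds the target by dimension one, yielding $\dim Hom_H(St(\chi),\1) \leq 1$.

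The main technical obstacle is tracking carefully how $t_s = w_s s_\e w_s^{-1}$ and the $E$-semi-linear automorphism $\theta^{-1}$ jointly act on $M_\emptyset$, verifying that the inner conjugations by $\e$-powers inside each $\D^*$ factor are invisible to $\tilde\chi$, together with the combinatorial identification of the anti-diagonal involution with the open $H$-orbit on $\B\backslash G$.
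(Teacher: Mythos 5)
Your proposal is correct and follows the same route as the paper: lift the linear form to $ind_\B^G(\mu_\chi)$, apply Mackey theory (Proposition \ref{mackey}) to get a distinguished orbit, use the explicit shape of $M_\emptyset^{\sigma_s}$ to force the anti-diagonal involution and the condition $\chi\overline\chi = \1$, identify this orbit with the open cell $\B H$, and close with Frobenius reciprocity for the multiplicity bound. You supply a bit more detail than the paper at two points — the averaging argument $\sum(i+\tau_s(i)) = n(n+1)/2$ forcing $\tau_s(i)=n+1-i$, and the explicit quotient argument for why any invariant form must restrict nontrivially to the open-orbit subrepresentation — but these are precisely the filled-in steps behind the paper's terser assertions, not a different method.
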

\begin{proof}
A nonzero $H$-invariant linear form $L$ on $St(\chi)$ lifts to $ind_{\B}^G(\mu_\chi)$, hence the first part of the statement. 
Now suppose that $ind_{P_\emptyset}^G(\mu_\chi)$ is distinguished. Then there is 
$s\in I(1,\dots,1)$, such that 
${\mu_\chi}_{|M_s^{\sigma_s}}=(\d_{P_{\emptyset,s}}^{1/2})_{|M_s^{\sigma_s}}$. Necessarily, one has
 $P_{\emptyset,s}=P_\emptyset$ because $P_\emptyset$ is a minimal parabolic subgroup. According to the description of the group $M_s^{\sigma_s}$ before 
Proposition \ref{modulus}, there is an involution $\tau_s$ of $\{1,\dots,n\}$ with no fixed points, such that 
$M_s^{\sigma_s}$ is the group of matrices $diag(a_{1,\tau_s(1)},\dots,a_{n,\tau_s(n)})$, such that 
$a_{\tau_s(i),i}= \theta(a_{i,\tau_s(i)})$ if $\tau_s(i)>i$. Hence 
the equality ${\mu_\chi}_{|M_s^{\sigma_s}}=(\d_{P_\emptyset}^{1/2})_{|M_s^{\sigma_s}}$, which reads 
$$\prod_{i<\tau_s(i)}\tilde{\chi}(a_{i,\tau_s(i)})\tilde{\chi}(\theta(a_{i,\tau_s(i)}))= \prod_{i=1}^n
\nu_E(a_{i,\tau_s(i)})^{\d(n-2i+1)},$$ is possible if and only if $\tau_s(i)=n+1-i$ for all $i$, i.e. if $s$ is anti-diagonal, which amounts to say that $w_s=1$. In this case 
$${\mu_\chi}(diag(a_1,\dots,a_m,\theta(a_m),\dots,\theta(a_1)))= \prod_{i=1}^m\tilde{\chi}(a_i\theta(a_i))=\prod_{i=1}^m\chi(N_{rd,E}(a_i))\chi(\overline{N_{rd,E}(a_i)})\,$$
and $$\d_{P_\emptyset}^{1/2}(diag(a_1,\dots,a_m,\theta(a_m),\dots,\theta(a_1)))=1,$$ which 
implies that $\overline{\chi}=\chi^{-1}$. Moreover, it follows from the proof 
of Proposition \ref{mackey} that if $L$ is a nonzero $H$-invariant linear form on $ind_{P_\emptyset}^G(\mu_\chi)$, 
then it restricts as a nonzero $H$-invariant linear form on 
$\mathcal{C}_c^\infty({P_\emptyset}\backslash P_\emptyset H,\mu_\chi)\simeq ind_{P_\emptyset\cap H}^H(\mu_\chi)$, which supports up to scaling at most 
one such linear form by Frobenius reciprocity law. The multiplicity at most one statement follows.

\end{proof}

We will show that $St(\chi)$ is distinguished if and only if $\chi_{|F^*}=\eta_{E/F}$. First we show one implication. 

\begin{prop}\label{infact1}
Suppose that $\chi_{|F^*}=\eta_{E/F}$, then $St(\chi)$ is distinguished.
\end{prop}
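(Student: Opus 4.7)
Following the strategy in the introduction, I would construct a nonzero $L\in Hom_H(ind_{P_\emptyset}^G(\mu_\chi),\1)$ and then show $L$ vanishes on the kernel $S=\sum_P ind_P^G(\mu_\chi)$ of the quotient map $ind_{P_\emptyset}^G(\mu_\chi)\twoheadrightarrow St(\chi)$, by proving no $ind_P^G(\mu_\chi)$ appearing in $S$ is $H$-distinguished.

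For the existence of $L$, I would invoke \cite{BD}: since $\chi_{|F^*}=\eta_{E/F}$ gives $\overline{\chi}=\chi^{-1}$, Proposition \ref{2poss} and Frobenius reciprocity produce an $H$-invariant linear form on the open-orbit piece $\mathcal{C}_c^\infty(P_\emptyset\backslash P_\emptyset H,\mu_\chi)$, and the meromorphic continuation of Blanc--Delorme extends it to a nonzero $L$ on all of $ind_{P_\emptyset}^G(\mu_\chi)$.

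For the non-distinction of each intermediate $ind_P^G(\mu_\chi)$, corresponding to a partition $\overline{n}=(1,\ldots,1,2,1,\ldots,1)$ with the $2$ in position $k$, I would apply Proposition \ref{mackey} and enumerate $s\in I(\overline{n})$: either \emph{(a)} $n_{k,k}=2$, in which case $P_s=P$ and $M_s^{\sigma_s}$ contains a factor $GL_2(\D)^\sigma\simeq GL_1(D)$ at position $k$---on this factor $\mu_\chi$ restricts to $\eta_{E/F}\circ N_{rd,F}$ while $\delta_{P_s}^{1/2}$ restricts to a power $\nu_F^c$ (concretely $c=2\d(m-k)$), so surjectivity of $N_{rd,F}\colon D^*\to F^*$ forces the Mackey--Frobenius identity $\eta_{E/F}=\nu_F^c$ on $F^*$, which is impossible because $\eta_{E/F}$ is non-trivial and quadratic while $\nu_F^c$ is either trivial or of infinite order; or \emph{(b)} $n_{k,j_1}=n_{k,j_2}=1$ for distinct $j_1\neq j_2$ both different from $k$, so that $P_s=P_\emptyset$ and the argument of Proposition \ref{2poss} forces the involution $\tau_s$ of $\{1,\ldots,n\}$ to be the reversal $i\mapsto n+1-i$---but this is combinatorially ruled out, since if $j_1<j_2$ the singletons $I_{k,j_1},I_{k,j_2}$ occupy consecutive positions $p,p+1$ in the lexicographic ordering of the intervals, while $I_{j_1,k},I_{j_2,k}$ lie in positions $q_1<q_2$ (row $j_1$ of $s$ precedes row $j_2$), whereas the reversal would require $q_1=n+1-p>n-p=q_2$.

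Combining the two steps, $L$ restricts to each submodule $ind_P^G(\mu_\chi)\hookrightarrow ind_{P_\emptyset}^G(\mu_\chi)$ as an $H$-invariant linear form that must vanish by the above, so $L$ vanishes on $S$ and descends to a nonzero element of $Hom_H(St(\chi),\1)$, which is the required distinction. The main obstacle lies in the case analysis, especially case (b), where one must carry out the combinatorial comparison of interval positions in $I=\{1,\ldots,n\}$ to definitively exclude the reversal as a possible $\tau_s$.
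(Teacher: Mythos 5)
Your proposal is correct and follows essentially the same approach as the paper's proof: existence of a nonzero $L\in Hom_H(ind_{P_\emptyset}^G(\mu_\chi),\1)$ via Theorem 2.8 of \cite{BD}, followed by the Mackey--Frobenius analysis of Proposition \ref{mackey} to rule out distinction of every next-to-minimal $ind_P^G(\mu_\chi)$, with the same dichotomy between $P_s=P$ (the $2$ lands on the diagonal of $s$) and $P_s=P_\emptyset$ (the $2$ splits into two off-diagonal $1$'s). The only differences are cosmetic: for case (a) the paper notes $\mu_\chi$ takes negative values on the $D^*$-factor while $\delta_{P_s}^{1/2}$ is positive, which is equivalent to your quadratic-versus-infinite-order observation, and for case (b) the paper's terse ``$k=n+1-i$ and $l=n+1-i$'' is exactly the forced-reversal condition that your explicit interval-position count (consecutive positions $p,p+1$ in row $k$ versus $q_1<q_2$ in rows $j_1<j_2$, incompatible with $q_1=n+1-p$ and $q_2=n-p$) makes precise.
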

\begin{proof}
We claim that $ind_{P_\emptyset}^G(\mu_\chi)$ is distinguished. As $\sigma(P_\emptyset)=P_\emptyset^-$, and as $\d_{P_\emptyset}^{-1/2}\mu_\chi$ is a ${M_{\emptyset}}^\sigma$-distinguished character of ${M_{\emptyset}}$, Theorem 2.8. of \cite{BD} applies, and it implies that $ind_{P_\emptyset}^G(\mu_\chi)$ is distinguished. Call $L$ 
such a nonzero $H$-invariant linear form on $ind_{P_\emptyset}^G(\mu_\chi)$, it is enough to show that 
$L$ vanishes on $ind_P^G(\mu_\chi)$, for all standard parabolic subgroups $P$ of type $\overline{n}$, with all 
$n_i$'s equal to $1$, except one being $2$. If 
$ind_P^G(\mu_\chi)$ was $H$-distinguished, then by Proposition \ref{mackey}, there would be 
$s\in I(\overline{n})$ such that ${\mu_\chi}_{|M_s^{\sigma_s}}=(\d_{P_s}^{1/2})_{|M_s^{\sigma_s}}.$ 
The group $P_s$ is equal to $P_\emptyset$ or $P$. It is equal to $P$ if and only if, if $n_i$ is the term equal to $2$ in $\overline{n}$, then $n_{i,i}=2$. In this case, the equality $\mu_\chi=\d_{P}^{1/2}$ on matrices 
$diag(1,\dots,1,g_{i,i},1,\dots,1)$, with $g_{i,i}\in D^*\subset GL(2,\D)$, is impossible since the character 
on the left side takes negative values, whereas that on the right side does not. Hence we are 
left with the case $P_s=P_\emptyset$, so that $\mu_\chi=\d_{P_\emptyset}^{1/2}$ must agree on $M_s^{\sigma_s}$. Let $i$ be the integer such that $n_i=2$. Then $n_i=n_{i,k}+n_{i,l}$, with $k<l$ both different from $i$. The only way that $\mu_\chi$ which is of the form 
$\tilde{\chi}\otimes \dots \otimes \tilde{\chi}$, and $\d_{P_\emptyset}^{1/2}(diag(a_1,\dots,a_n))=\prod_{i=1}^n \nu_E(a_i)^{\d(2i-1-n)/2}$ 
can agree on $M_s^{\sigma_s}$, would be that $k=n+1-i$ and $l=n+1-i$ as well. This is not possible.
\end{proof}

Now we prove the main result of this section.

\begin{thm} \label{main1}
The representation $St(\chi)$ is distinguished if and only if $\chi_{|F^*}=\eta_{E/F}$, in which case 
$Hom_H(St(\chi),\eta_{E/F})$ is of dimension $1$.
\end{thm}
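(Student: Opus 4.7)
After Propositions~\ref{2poss} and~\ref{infact1}, what remains is to show that $St(\chi)$ is not $H$-distinguished when $\chi_{|F^*}=\1$. The plan is by contradiction: if $St(\chi)$ has a nonzero $H$-invariant form, then by Proposition~\ref{2poss} it lifts to the unique (up to scalar) $H$-invariant form $\tilde L$ on $ind_{P_\emptyset}^G(\mu_\chi)$; existence of $\tilde L$ follows from Blanc--Delorme exactly as in Proposition~\ref{infact1}, because $\d_{P_\emptyset}^{-1/2}\mu_\chi$ is trivial on $M_\emptyset^\sigma$ in both cases $\chi_{|F^*}\in\{\1,\eta_{E/F}\}$. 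For $\tilde L$ to descend to $St(\chi)$ it must vanish on $S=\sum_P ind_P^G(\mu_\chi)$, so it is enough to exhibit one $P$ of type $(1^{m-1},2,1^{m-1})$ and one $f\in ind_P^G(\mu_\chi)$ with $\tilde L(f)\neq 0$.

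I would take $P$ to be the ``middle'' parabolic $P_{(1^{m-1},2,1^{m-1})}$ (with the block of size $2$ at position $m$), and re-run the orbit analysis of Proposition~\ref{infact1} for $\chi_{|F^*}=\1$: the case $P_s=P_\emptyset$ is still ruled out by the positional obstruction of \emph{loc.\ cit.} (that argument only uses $\mu_\chi(a)\mu_\chi(\theta(a))=\chi(N_{E/F}(N_{rd,E}(a)))=1$, which holds for both choices of $\chi_{|F^*}$), while in the case $P_s=P$ the relation $\mu_\chi|_{M_s^{\sigma_s}}=\d_P^{1/2}|_{M_s^{\sigma_s}}$ is automatic on the central $GL(2,\D)$-block (both sides are trivial on the image of $D^*$) and on each off-diagonal pair $\{i,k\}$ boils down to the single exponent equation $e_i+e_k=0$, which forces $k=2m-i$. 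This singles out a unique $s^*\in I(\overline n)$, namely $s^*_{m,m}=2$ and $s^*_{i,2m-i}=1$ for $i<m$, and Proposition~\ref{mackey} combined with Frobenius reciprocity gives $\dim Hom_H(ind_P^G(\mu_\chi),\1)=1$, generated by a form coming from the middle orbit $Pw_{s^*}H$.

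The remaining point, that $\tilde L|_{ind_P^G(\mu_\chi)}\neq 0$, I would prove by constructing an $f\in ind_P^G(\mu_\chi)$ adapted to the middle orbit whose Blanc--Delorme integral factors through the central $GL(2,\D)$-block, reducing the computation to the rank-one case $m=1$, $G=GL(2,\D)$, $H=D^*$. In that base case the decomposition $D=\D\oplus \e.\D$ identifies $\D^*\backslash D^*$ as a $D^*$-homogeneous space with the projective line $\mathbb{P}^1(\D)$; in particular $\D^*\backslash D^*$ is compact, so the Blanc--Delorme form is given, without any analytic continuation, by the absolutely convergent integral $\tilde L(f)=\int_{\D^*\backslash D^*}f(h)\,dh$. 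Plugging in the basic vector $f=\mu_\chi\in ind_G^G(\mu_\chi)\subset S$ and using $\mu_\chi|_H=\1$ (because $\chi_{|F^*}=\1$ and $N_{rd,E}|_H=N_{rd,F}$) yields $\tilde L(\mu_\chi)=\mathrm{vol}(\D^*\backslash D^*)>0$, which is the desired contradiction. The main obstacle is the construction of $f$ and the factorisation step for general $m$; the essential ingredient that lets the $d$ even argument avoid analytic continuation is precisely the compactness of $\D^*\backslash D^*\cong\mathbb{P}^1(\D)$, which fails when $D\otimes_F E$ remains a division algebra.
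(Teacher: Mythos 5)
Your overall strategy matches the paper's: argue by contradiction, pass to the unique Blanc--Delorme form $\tilde L$ on $ind_{\B}^G(\mu_\chi)$, take the middle parabolic $P=P_{(1^{m-1},2,1^{m-1})}$, and show $\tilde L$ does not vanish on $ind_P^G(\mu_\chi)$. You also correctly spot that the compactness of $\D^*\backslash D^*$ is the decisive ingredient. But you leave exactly the decisive step --- ``the construction of $f$ and the factorisation step for general $m$'' --- unresolved, and the orbit bookkeeping you do beforehand does not close it: knowing $\dim Hom_H(ind_P^G(\mu_\chi),\1)=1$ and that a nonzero form is supported on $Pw_{s^*}H$ does not by itself tell you whether the \emph{particular} linear form $\tilde L$ restricts to zero or not on $ind_P^G(\mu_\chi)$. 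Note also that $w_{s^*}=1$, so the ``middle orbit'' $Pw_{s^*}H$ you invoke is not a middle orbit at all: it is the open orbit $PH$.

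The observation that lets the paper finish without any reduction to $m=1$, and which your sketch is missing, is that for this particular $P$ one has $PH=P_\emptyset H$. Both $P_\emptyset H$ and $PH$ are open (Proposition~\ref{bigcell}, since $\sigma(P_\emptyset)=P_\emptyset^-$ and $\sigma(P)=P^-$), and $P_\emptyset\backslash P$ is compact, so one gets honest inclusions of $H$-modules
$\mathcal{C}_c^\infty(P\backslash PH,\mu_\chi)\subset\mathcal{C}_c^\infty(P_\emptyset\backslash P_\emptyset H,\mu_\chi)\subset ind_{\B}^G(\mu_\chi)$
and simultaneously $\mathcal{C}_c^\infty(P\backslash PH,\mu_\chi)\subset ind_P^G(\mu_\chi)$. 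On the open orbit $\tilde L$ is pinned down by Frobenius reciprocity to be the explicit integral $\phi\mapsto\int_{P_\emptyset\cap H\backslash H}\phi(h)\,dh$; for $\phi$ left $P$-invariant this factors as $vol(P_\emptyset\cap H\backslash P\cap H)\int_{P\cap H\backslash H}\phi(h)\,dh$, with $P_\emptyset\cap H\backslash P\cap H\simeq\D^*\backslash D^*$ compact, so the restriction of $\tilde L$ to $\mathcal{C}_c^\infty(P\backslash PH,\mu_\chi)\subset ind_P^G(\mu_\chi)$ is visibly nonzero. No analytic continuation, no reduction to $GL(2,\D)/D^*$, and no appeal to multiplicity one for $ind_P^G(\mu_\chi)$ are needed; your detour through $m=1$ is a symptom of not yet having the identity $PH=P_\emptyset H$ in hand.
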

\begin{proof}
According to Proposition \ref{2poss}, it remains to show that if $\chi_{|F^*}=1$, then the 
representation $St(\chi)$ is not distinguished. We suppose that $\chi_{|F^*}=1$ and that 
$St(\chi)$ is distinguished. The second condition implies that $Ind_{P_\emptyset}^G(\mu_\chi)$ is distinguished, and the proofs 
of Proposition \ref{mackey} and \ref{2poss} show that any nonzero $H$-invariant linear form $L$ on $Ind_{P_\emptyset}^G(\mu_\chi)$ 
vanishes on all $H$-sub-quotients $\mathcal{C}_c^\infty(P_\emptyset\backslash P_\emptyset w_sH,\mu_\chi)$, except when $w_s=1$. In particular, 
as 
$$\mathcal{C}_c^\infty(P_\emptyset\backslash P_\emptyset H,\mu_\chi)= 
\mathcal{C}_c^\infty(P_\emptyset\backslash P_\emptyset H,\mu_\chi) \simeq \mathcal{C}_c^\infty(P_\emptyset\cap H\backslash H),$$ by Frobenius reciprocity, 
the space $Hom_H(\mathcal{C}_c^\infty(P_\emptyset\backslash P_\emptyset H),\1)$ is one dimensional, and up to a nonzero scalar, 
$L_{|\mathcal{C}_c^\infty(P_\emptyset\backslash P_\emptyset H)}$ is given by 
$$L:\phi\mapsto \int_{P_\emptyset\cap H\backslash H} \phi(h)dh$$ (notice 
that the group $P_\emptyset\cap H$ is equal to ${M_{\emptyset}}$, hence unimodular just as $H$, and the integral is thus well defined). As 
$\sigma(P_\emptyset)=P_\emptyset^-$, the double 
coset $P_\emptyset H$ is open in $G$ according to Proposition \ref{bigcell}, hence 
$\mathcal{C}_c^\infty(P_\emptyset\backslash P_\emptyset H,\mu_\chi)\subset Ind_{P_\emptyset}^G(\mu_\chi)$. Now here 
is the key observation of our argument: notice that if 
$P$ is the parabolic subgroup of type 

$$(n_1=1,\dots,n_{m-1}=1,n_m=2,n_{m+1}=1,\dots,n_{n-1}=1),$$ one has 
$PH=P_\emptyset H$, and as $P_\emptyset\backslash P$ is compact, one has 
$$\mathcal{C}_c^\infty(P\backslash PH,\mu_\chi)\subset \mathcal{C}_c^\infty(P_\emptyset\backslash P_\emptyset H,\mu_\chi).$$
But for $\phi\in \mathcal{C}_c^\infty(P\backslash PH,\mu_\chi)$, as $P_\emptyset\cap H\backslash P\cap H \simeq \D^*\backslash D^*$ 
is compact, one has 
$$\int_{P_\emptyset\cap H\backslash H} \phi(h)dh=vol(P_\emptyset\cap H\backslash P\cap H)\int_{P\cap H\backslash H} \phi(h)dh,$$ and in 
particular $L_{|\mathcal{C}_c^\infty(P\backslash PH,\mu_\chi)}$ is nonzero. As $PH$ is open in $G$ by Proposition \ref{bigcell} again, 
the space $\mathcal{C}_c^\infty(P\backslash PH,\mu_\chi)$ is contained in $ind_P^G(\mu_{\chi})$, hence $L$ does 
not vanish on $ind_P^G(\mu_{\chi})$, and this is absurd because $L$ comes from a linear form on $St(\chi)$. 
The multiplicity one statement is already a part of Proposition \ref{2poss}.
\end{proof}

\section{The case $d$ odd}\label{odd}

In this case $D\otimes_F E$ is a division algebra $D_E$ of index $d$ over its center $E$, and the 
Galois involution $\theta: z\mapsto \overline{z}$ extends to an involution 
$\theta:=Id\otimes \theta$ of $D_E$, which we will also write $x\mapsto \overline{x}$. In this case, 
$G=GL(m,D_E)$ and $H=GL(m,D)$. We denote by $O_{D_E}$ (resp. $O_D$) the ring of integers of 
$D_E$ (resp. $D$). We choose $\l\in E-F$, such that $\l^2 \in F$. We set $\nu_E(g)=|N_{rd,E}(g)|_E$ 
for $g\in GL(k,D_E)$. We also set $\nu_F(h)=|N_{rd,F}(h)|_F$ 
for $h\in GL(k,D)$. For $\chi$ a character of $E^*$, we denote again by $\mu_\chi$ the character 
$\chi \circ N_{rd,G}$, and its restriction to any subgroup of $G$.

\subsection{Representatives of $P\backslash G/H$}\label{rep2}

Let $P$ be the standard parabolic subgroup of $G$ corresponding to a partition $\overline{m}=(m_1,\dots,m_t)$ of $m$. Then 
$G/P$ identifies with the flags $\{0\}\subset V_1 \subset \dots \subset V_t=V$ of $V=D_E^m$, with 
$dim(V_i)_{D_E}=m_1+\dots+m_t$. We denote by $\mathcal{B}=(e_1,\dots,e_m)$ the canonical basis of $V$. The involution $\theta$ acts directly on $V$, and $H$ is the fixed points in $GL(V)_{D_E}$ of the 
involution $f\mapsto \theta\circ f\circ \theta$. In particular the situation differs only notationally 
from \cite{M11}, and all the results of Sections 3 and 4 there are still true in the more general situation studied here. 
For example if $\mathcal{F}=\{0\}\subset V_1 \subset \dots \subset V_t$ is a flag as above, for $i\leq j$, we denote by 
$S_{i,j}$ a complement of $V_i\cap \theta(V_{j-1})+\theta(V_{i-1})\cap V_j$ in $V_i\cap \theta(V_{j-1})$, which we choose 
$\theta$-stable if $i=j$. Setting $S_{j,i}=\theta(S_{i,j})$ for $i<j$, then 
$V=\oplus_{k,l} S_{k,l}$, and each $V_i$ decomposes as $\oplus_{k=1}^i\oplus_{l=1}^t S_{k,l}$. Two flags $\mathcal{F}$ 
and $\mathcal{F}'$ are in the same $H$-orbit if and only if $m_{i,j}=m'_{i,j}$, where $m_{i,j}=dim(S_{i,j})_{D_E}$ 
and $m'_{i,j}=dim(S'_{i,j})_{D_E}$ for all $i$ and $j$. We denote by 
$J(\overline{m})$ the set of symmetric matrices $(m_{i,j})_{i,j}$ of size $t\times t$ with positive integral entries, such that the 
sum of the $i$-th row is equal to $m_i$. To a matrix $s=(m_{i,j})_{i,j}$ in $J(\overline{m})$, we naturally have the sub-partition 
$(m_{1,1},m_{1,2},\dots,m_{r,r-1},m_{r,r})$ of $\overline{m}$ associated where we only take the nonzero 
$m_{i,j}$'s). For $s\in J(\overline{m})$, we denote by 
$\mathcal{B}_{i,j}$ the family $(e_{m_1+\dots+m_{i-1}+m_{i,1}+\dots+m_{i,j-1}+1},\dots,e_{m_1+\dots+m_{i-1}+m_{i,1}+\dots+m_{i,j}})$, 
and by $\mathcal{B}_{\{i,j\}}$ the family $\mathcal{B}_{i,j}\cup \mathcal{B}_{j,i}$ for $i<j$. In particular 
$\mathcal{B}=\mathcal{B}_{1,1}\cup \mathcal{B}_{1,2} \cup \dots \cup \mathcal{B}_{t,t-1} \cup \mathcal{B}_{t,t}$ is a basis of $V$. 
One then has the following result.

\begin{prop} \label{representatives2}
For $s\in J(\overline{m})$, let $u_s$ be the matrix which represents in $\mathcal{B}$, the linear map $v_s$ from $V$ to itself, 
which stabilises $Vect(\mathcal{B}_{i,i})_{D_E}$ and $Vect(\mathcal{B}_{\{i,j\}})_{D_E}$ for all $i\neq j$, and such that 
$Mat_{\mathcal{B}_{i,i}}((v_s)_{|Vect(\mathcal{B}_{i,i})_{D_E}})=I_{m_{i,i}}$, and 
$$Mat_{\mathcal{B}_{\{i,j\}}}((v_s)_{|Vect(\mathcal{B}_{\{i,j\}})_{D_E}})=\begin{pmatrix} I_{m_{i,j}} & -\l I_{m_{i,j}} \\
 I_{m_{i,j}} & \l I_{m_{i,j}} \end{pmatrix}.$$
The set $\{u_s,s\in J(\overline{m})\}$, form a set of representatives for $P\backslash G/H$. In particular 
the identity map of  $\{u_s,s\in J(\overline{m})\}$, induces a bijection from $P\backslash G/H$ to 
$P_{\overline{m}}(E)\backslash GL(m,E)/GL(m,F)$.
\end{prop}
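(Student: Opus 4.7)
The plan is to adapt the strategy of the even-$d$ case (Proposition \ref{representatives}) and of \cite{M11}. The discussion preceding the statement already establishes the crucial classification: two flags of type $\overline{m}$ lie in the same $H$-orbit if and only if their invariants $m_{i,j}=\dim_{D_E}(S_{i,j})$ agree, so $H$-orbits on $G/P$ are in bijection with $J(\overline{m})$. Via the standard bijection $P\backslash G/H \leftrightarrow H\backslash G/P$ together with $gP\mapsto g\mathcal{F}_0$ (and following the convention of Proposition \ref{representatives}, where the flag attached to the representative $w_s$ is $w_s^{-1}\mathcal{F}_0$), the proposition reduces to showing that for every $s\in J(\overline{m})$, the flag $\mathcal{F}_s:=\bigl(u_s^{-1}V_1^0\subset\cdots\subset u_s^{-1}V_t^0\bigr)$ has invariants exactly equal to the entries of $s$.

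First I would invert the block $\begin{pmatrix} I & -\lambda I \\ I & \lambda I\end{pmatrix}$ explicitly, obtaining $\begin{pmatrix} I/2 & I/2 \\ -I/(2\lambda) & I/(2\lambda)\end{pmatrix}$. For each pair $(i,j)$ with $i<j$ and each index $r=1,\dots,m_{i,j}$, with $f_r\in\mathcal{B}_{i,j}$ and $g_r\in\mathcal{B}_{j,i}$, I would introduce $w^{\pm}_r:=f_r\pm g_r/\lambda\in Vect(\mathcal{B}_{\{i,j\}})$; the formula for $u_s^{-1}$ then gives $u_s^{-1}(f_r)=w^-_r/2$ and $u_s^{-1}(g_r)=w^+_r/2$. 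The crucial antilinearity property, using $\overline{\lambda}=-\lambda$, is $\theta(w^\pm_r)=w^\mp_r$, so that $w^+$ and $w^-$ span complementary $D_E$-lines that are swapped by $\theta$.

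The key structural observation is that the ambient decomposition $V=\bigoplus_k Vect(\mathcal{B}_{k,k})_{D_E}\oplus\bigoplus_{k<l} Vect(\mathcal{B}_{\{k,l\}})_{D_E}$ is respected by every $u_s^{-1}V_i^0$ and by its image under $\theta$. Indeed, for a pair $(k,l)$ with $k<l$, the contribution of $V_i:=u_s^{-1}V_i^0$ to the block $Vect(\mathcal{B}_{\{k,l\}})$ is the full block if $l\leq i$, is $Vect(w^-_r)_{D_E}$ if $k\leq i<l$, and is zero otherwise, with an analogous description of $\theta(V_j)$ obtained by replacing $w^-$ by $w^+$ and $i$ by $j$. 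Plugging these block-wise descriptions into the alternating-sum formula $\dim S_{i,j}=d_{i,j}-d_{i,j-1}-d_{i-1,j}+d_{i-1,j-1}$ (where $d_{i,j}=\dim V_i\cap\theta(V_j)$) and using $Vect(w^+_r)\cap Vect(w^-_r)=0$, I would verify that only the block $(k,l)=(i,j)$ (for $i<j$) and the diagonal block $(i,i)$ (for $i=j$) contribute, each yielding exactly $m_{i,j}$; all the other blocks cancel.

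The final claim about the bijection with $P_{\overline{m}}(E)\backslash GL(m,E)/GL(m,F)$ follows at once, because every entry of $u_s$ lies in $E$ and the entire argument specializes to $D=F$ (so $D_E=E$) without change. The main obstacle is the block-by-block bookkeeping in the four intersection dimensions appearing in the definition of $S_{i,j}$, but once the vectors $w^\pm_r$ are introduced and the pair-wise direct-sum decomposition of $V$ is exploited, the verification becomes essentially routine combinatorics.
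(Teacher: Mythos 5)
Your proof is correct and follows essentially the route the paper delegates to: the paper proves this proposition only by the remark that ``the situation differs only notationally from \cite{M11}'' and cites Sections 3 and 4 there, whereas you reconstruct the M11 argument directly in the $D$-coefficient setting. Your key steps check out: the inverse of the block $\begin{pmatrix} I & -\lambda I\\ I & \lambda I\end{pmatrix}$ is as stated (using that $\lambda\in E$ is central in $D_E$), the vectors $w^\pm_r=f_r\pm g_r\lambda^{-1}$ satisfy $\theta(w^\pm_r)=w^\mp_r$ because $\theta$ fixes the coordinate vectors and sends $\lambda$ to $-\lambda$, and since $u_s^{-1}$ and $\theta$ both preserve the decomposition $V=\bigoplus_k Vect(\mathcal{B}_{k,k})\oplus\bigoplus_{k<l}Vect(\mathcal{B}_{\{k,l\}})$, the intersection dimensions $d_{i,j}=\dim(V_i\cap\theta(V_j))$ can be computed block by block. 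Carrying out the mixed second difference $d_{i,j}-d_{i,j-1}-d_{i-1,j}+d_{i-1,j-1}$ (which, using the inclusion formula for $V_i\cap\theta(V_{j-1})+V_{i-1}\cap\theta(V_j)$, indeed gives $\dim S_{i,j}$), the block $\{k,l\}$ contributes $m_{k,l}\bigl([i=l][j=k]+[i=k][j=l]\bigr)$ and the diagonal block $(k,k)$ contributes $m_{k,k}[i=j=k]$, so $\dim S_{i,j}=m_{i,j}$. Combined with the flag classification stated before the proposition and the convention (from Proposition \ref{representatives}) that the flag attached to the representative $u_s$ of $Pu_sH$ is $u_s^{-1}\mathcal{F}_0$, this shows the $u_s$ are a full set of inequivalent representatives; and since all entries of $u_s$ lie in $E$ and the argument is insensitive to the division algebra $D$, the final bijection with $P_{\overline{m}}(E)\backslash GL(m,E)/GL(m,F)$ follows. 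The one thing worth spelling out more carefully in a write-up is the verification that the complementary $D_E$-lines $Vect(w^+_r)$ and $Vect(w^-_r)$ are indeed swapped by $\theta$ and intersect trivially (this is the change-of-basis $\begin{pmatrix}1 & 1\\ \lambda^{-1}& -\lambda^{-1}\end{pmatrix}$ being invertible), but your proposal already flags this. What you gain over the paper's treatment is a self-contained proof; what you lose is brevity, but that is a worthwhile trade since the paper otherwise offers no argument here.
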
 

\begin{rem}\label{indep2}
In particular, as announced in Remark \ref{indep1}, for fixed $n=2m$, the set $I(\overline{n})$ is naturally a subset of $J(\overline{n})$, and thus one has an injection $w_s\mapsto u_s$ from 
$P_{\overline{n}}(E)\backslash GL(n,E)/GL(m,\mathbb{H})$ into $P_{\overline{n}}(E)\backslash GL(n,E)/GL(n,F)$. It would be nice to have a conceptual explanation for this. Notice that with our choices, 
the map $w_s\mapsto u_s$ sends the big cell to the small one.
\end{rem}

The matrix $w_s=u_s u_s^{-\theta}$ is a permutation matrix of order $2$. Writing 
$\llbracket 1,\dots,n \rrbracket$ as an ordered disjoint union 
$I_{1,1}\cup I_{1,2}\cup \dots \cup I_{r,r-1}\cup I_{r,r}$, with $I_{i,j}$ of length $m_{i,j}$, 
then the permutation associated to $w_s$ swaps $I_{i,j}$ and $I_{j,i}$ if $i<j$, preserving the order in those 
intervals, and acts as the identity on $I_{i,i}$. The group 
$u_s H u_s^{-1}$ is the fixed points of the involution $\sigma_s:g\mapsto w_s^{-1}\overline{g} w_s$. Again, 
the standard parabolic subgroup $P_s$ of $G$ associated to $s$ viewed as a sub-partition of 
$\overline{n}$, affords a useful decomposition of $P$.

\begin{prop}
For any $s\in J(\overline{m})$, one has $P\cap u_s H u_s^{-1}=P_s\cap u_s H u_s^{-1}$, and 
$P_s\cap u_s H u_s^{-1}$ is the semi direct product of $M_s\cap u_s H u_s^{-1}$ and $N_s\cap u_s H u_s^{-1}$.
\end{prop}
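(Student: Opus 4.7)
The proof follows the pattern of the analogous proposition in Section~3.2. The situation here is structurally cleaner: $\theta$ is a genuine involution preserving each of $P$, $M$, $M_s$, $N$, $N_s$ entrywise, and the involution whose fixed points give $u_sHu_s^{-1}$ is just $\sigma_s(g)=w_s^{-1}\overline{g}w_s$, where $w_s$ is a literal permutation matrix of order two.

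The first step is the odd-case analogue of Lemma \ref{boozer}: $w_s(\Phi_M^\pm)\subset\Phi^\pm$, together with the identity $\Phi_M\cap w_s(\Phi_M)=\Phi_{M_s}$. Since $\tau_s$ fixes each $I_{i,i}$ pointwise and swaps $I_{i,j}$ with $I_{j,i}$ preserving internal order (for $i<j$), a quick case analysis on $\alpha=\alpha_{a,b}\in\Phi_M^+$ with $a\in I_{i,j}$, $b\in I_{i,k}$, $j\leq k$ (cases $j=k$ and $j<k$) shows $\tau_s(a)<\tau_s(b)$; the same block-analysis identifies $\Phi_M\cap w_s(\Phi_M)$ with $\Phi_{M_s}$.

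Next, to prove $P\cap u_sHu_s^{-1}\subset P_s$, I use the equation $\sigma_s(p)=p$ to get $\overline{p}=w_spw_s^{-1}$, which (since $\theta(P)=P$) places $p$ in $P\cap w_s^{-1}Pw_s$; I then show $\mathrm{Lie}(P)\cap w_s^{-1}\mathrm{Lie}(P)w_s\subset\mathrm{Lie}(P_s)$ via the decomposition $\mathrm{Lie}(P)=\mathrm{Lie}(P_s)\oplus(\mathrm{Lie}(M)\cap\mathrm{Lie}(N_s^-))$: for any $\alpha\in\Phi_M^--\Phi_{M_s}^-$, the lemma analogue gives $w_s(\alpha)\in\Phi^-$, while $\Phi_M\cap w_s(\Phi_M)=\Phi_{M_s}$ rules out $w_s(\alpha)\in\Phi_M$, so $w_s(\alpha)\in\Phi^--\Phi_M$ and the corresponding root space has trivial intersection with $\mathrm{Lie}(P)$.

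For the semi-direct product decomposition, write $p=mn$ with $m\in M_s$, $n\in N_s$ and apply $\sigma_s$. Since $\theta$ preserves $M_s$ and $N_s$ entrywise and $w_s$ normalises $M_s$, the $M_s$-part is automatic; the key point is to show $w_snw_s^{-1}\in N_s$. From $\overline{m}\,\overline{n}=(w_smw_s^{-1})(w_snw_s^{-1})$ one already has $w_snw_s^{-1}\in P_s$, and the inclusion $P\cap w_sN_sw_s^{-1}\subset N_s$---proved by the same root-theoretic case analysis using the lemma analogue together with $w_s(\Phi_{M_s}^+)=\Phi_{M_s}^+$---forces $w_snw_s^{-1}\in N_s$. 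Uniqueness of the $M_s\ltimes N_s$ decomposition then yields $\sigma_s(m)=m$ and $\sigma_s(n)=n$, finishing the proof.

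The only real work is the root-theoretic bookkeeping, but since $\tau_s$ has a particularly transparent description here (identity on $I_{i,i}$, order-preserving swap of $I_{i,j}$ and $I_{j,i}$) this is routine; no new ideas beyond those of Section~3.2 are required.
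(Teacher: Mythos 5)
Your proof is correct and takes essentially the same route as the paper's proof of the analogous proposition in Section~3.2 (for this odd-case statement the paper gives no proof, deferring to that analogy and to \cite{M11}). The odd-case analogue of Lemma~\ref{boozer}, the identification $\Phi_M\cap\tau_s(\Phi_M)=\Phi_{M_s}$, and the inclusion $P\cap w_sN_sw_s^{-1}\subset N_s$ are all correctly established from the explicit description of $\tau_s$ (identity on each $I_{i,i}$, order-preserving swap of $I_{i,j}$ and $I_{j,i}$), and the simplifications you note ($\theta$ a genuine involution, $w_s$ a permutation of order two) are exactly right.
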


The elements of group $M_s^{\sigma_s}=M_s\cap u_s H u_s^{-1}$ are the matrices 
$diag(g_{1,1},g_{1,2},\dots,g_{r,r-1},g_{r,r})$, with $g_{j,i}\in G_{n_{j,i}}$ equal to $\theta(g_{i,j})$. We also have the same relation between modulus characters.

\begin{prop}\label{modulus2}
$(\d_{P_s^{\sigma_s}})_{|M_s^{\sigma_s}}=(\d_{P_s}^{1/2})_{|M_s^{\sigma_s}}.$
\end{prop}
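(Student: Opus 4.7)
The plan is to mirror the proof of Proposition \ref{modulus} from the even-$d$ case (itself a direct adaptation of Proposition 4.4 of \cite{M11} and Proposition 2.2 of \cite{JLR}). The only new input needed is the explicit description of $M_s^{\sigma_s}$ given just above the statement, together with the fact that $\nu_E$ is $\theta$-invariant: indeed, $\theta$ extends the nontrivial Galois involution of $E/F$, and $|\cdot|_E$ restricts to $|\cdot|_F$ on $F$, so $\nu_E \circ \theta = \nu_E$ on any $GL(k,D_E)$.

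First, I would write an element $m \in M_s^{\sigma_s}$ as $\mathrm{diag}(g_{1,1}, g_{1,2}, \dots, g_{r,r-1}, g_{r,r})$, indexed in the sub-partition ordering attached to $s$, with $g_{j,i} = \theta(g_{i,j})$, and compute $\delta_{P_s}(m)$ as a product over the root spaces $\mathrm{Lie}(N_\alpha) \subset \mathrm{Lie}(N_s)$, i.e.\ over ordered pairs of block positions. This yields an explicit expression in terms of the $\nu_E(g_{i,j})$'s weighted by the block sizes $m_{k,l}$.

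Next, I would observe that $\sigma_s$ acts on $\Phi^+ - \Phi_{M_s}^+$ via the permutation $w_s$, which swaps the block positions $(i,j) \leftrightarrow (j,i)$. Following the root-space bookkeeping of \cite{JLR}, $\mathrm{Lie}(N_s)$ decomposes under $\sigma_s$ into pairs of root subspaces that are swapped (each such pair contributing to $\mathrm{Lie}(N_s^{\sigma_s})$ via its diagonal fixed subspace) together with a setwise $\sigma_s$-stable part coming from the diagonal blocks; one then reads off $\delta_{P_s^{\sigma_s}}(m)$ from the $\mathrm{Ad}(m)$-action on the fixed-point Lie algebra. Using $\nu_E(g_{j,i}) = \nu_E(\theta(g_{i,j})) = \nu_E(g_{i,j})$ on $M_s^{\sigma_s}$, the contributions from swapped pairs in $\delta_{P_s}$ become exact squares of the corresponding contributions to $\delta_{P_s^{\sigma_s}}$, and the diagonal contributions match as well, yielding the identity $(\delta_{P_s^{\sigma_s}})_{|M_s^{\sigma_s}} = (\delta_{P_s}^{1/2})_{|M_s^{\sigma_s}}$.

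The main obstacle is purely bookkeeping: carefully tracking the weights $m_{i,j}$ in the product formulas, and ensuring that the ordering conventions on the sub-partition attached to $s$ are compatible with the root-space pairing induced by $\sigma_s$. No new conceptual input is needed beyond the remark that $\nu_E \circ \theta = \nu_E$, and the argument runs strictly parallel to the even-$d$ proof of Proposition \ref{modulus}; in particular, one can simply invoke \cite{JLR}, Proposition 2.2, after the explicit parametrization of $M_s^{\sigma_s}$ has been established.
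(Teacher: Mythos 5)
Your proposal is correct and matches the paper's approach: the paper gives no separate proof of Proposition~\ref{modulus2}, relying instead on the remark (just before Proposition~\ref{representatives2}) that ``the situation differs only notationally from \cite{M11}, and all the results of Sections 3 and 4 there are still true,'' i.e.\ it cites Proposition 4.4 of \cite{M11} (which is Proposition 2.2 of \cite{JLR}) exactly as you propose. One small imprecision in your intermediate bookkeeping is worth flagging: $\mathrm{Lie}(N_s)$ does not decompose purely into $\sigma_s$-swapped pairs plus a $\sigma_s$-stable diagonal part, since some root spaces (for instance the one between the blocks $I_{i,j}$ and $I_{j,i}$) are carried by $\sigma_s$ into $\mathrm{Lie}(N_s^-)$ and thus contribute nothing to $\mathrm{Lie}(N_s^{\sigma_s})$; but their contribution to $\d_{P_s}$ on $M_s^{\sigma_s}$ is also trivial (using $m=\sigma_s(m)$ together with $\nu_E\circ\theta=\nu_E$), so the identity $\d_{P_s^{\sigma_s}}=\d_{P_s}^{1/2}$ on $M_s^{\sigma_s}$ survives — and this case analysis is precisely what the cited computation of \cite{JLR} carries out, so no gap results.
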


\subsection{Non vanishing of invariant linear forms}\label{intertwiningperiods}

In this section we will show that the $H$-invariant linear form on $ind_{\B}^G(\1)$ does not vanish on $ind_{P}^G(\1)$ 
for some well chosen parabolic subgroup of $G$ containing $\B$ properly.\\ 
Let $s_0$ be the partition the element of $J(\overline{m})$ such that for all $i$, 
one has $m_{i,n+1-i}=1$. We denote by $u_0$ the matrix $u_{s_0}$, and by $w_0$ the matrix 
$w_{s_0}$ (it is the longest Weyl element). Then the double class $P_\emptyset u_0H$ is open in $G$ because 
$u_0^{-1}P_\emptyset u_0$ is $\theta$-split. We let $\d_s$ be the character $\d_{P_\emptyset}^{s}$. 
For $m=2$, the matrix $u_0$ is the matrix $\begin{pmatrix} 1 & -\l \\ 1 & \l \end{pmatrix}\in GL(2,D_E)$.\\ 

%\noindent When $m$ is \textit{even}, we define $u_1$ in $P\backslash G/ H$ 
%by $u_1=diag(u,\dots,u)$. We set $w_1=u_1u_1^{-\theta}$. Let $w$ be the matrix associated to the permutation 
%$i\mapsto 2i-1$ and $m+1-i\mapsto 2i$ if $i\in \llbracket 1, m/2 \rrbracket$, so that 
%$w_1=ww_0w^{-1}$. Let $\d_s^w$ be the character of $\B$ which is equal to $\d_s(w\ .\ w^{-1})$ on ${M_{\emptyset}}$.\\

\noindent For $f\in ind_{P_\emptyset}^G(\1)$, we denote by 
$f_s$ the only element in $ind_{P_\emptyset}^G(\d_s)$ such that $f_s$ restricted to $K=G(O_F)=GL(m,O_{D_E})$ is 
equal to $f_{|K}$. If $\phi$ is the constant function equal to $1$ in $ind_{P_\emptyset}^G(\1)$, then $f_s$ 
is nothing else than $f\phi_s$. When $m=2$, we will write $\phi_2$ instead of $\phi$.

\begin{prop}\label{linearform}
For $f$ in $ind_{P_\emptyset}^G(\d_s)$, the integral $I_m(f_s)=\int_{u_0^{-1} {P_\emptyset} u_0\cap H\backslash H} f_s(u_0h)dh$ converges for $Re(s)$ large enough. 
Moreover, there is $Q\in \C[X]$ such that $Q(q^{-s})I_m(f_s)$ belongs to $\C[q^{\pm s}]$ for all $f\in ind_{P_\emptyset}^G(\1)$.
\end{prop}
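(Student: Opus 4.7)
The plan is to combine a Bruhat-cell parametrization of the open $H$-orbit $P_\emptyset u_0 H$ with standard Gindikin--Karpelevich type estimates, obtaining simultaneously the convergence for $Re(s)$ large and the rational continuation.

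First, I would observe that $u_0$ lies in the open Bruhat cell $P_\emptyset \cdot \bar N$ of $G$ (where $\bar N$ denotes the opposite unipotent radical of $P_\emptyset$): for $m=2$ this is a direct calculation, and the general case follows block-by-block from the explicit form of $u_0$ in Proposition \ref{representatives2}. Consequently, for generic $h \in H$ one has $u_0 h \in P_\emptyset \bar N$ with a factorization $u_0 h = p(h) \bar n(h)$. Since $P_\emptyset u_0 H$ is the open orbit, it has the same dimension as $G/P_\emptyset \cong \bar N$, so the map $h \mapsto \bar n(h)$ induces a measurable bijection from $u_0^{-1} P_\emptyset u_0 \cap H \backslash H$ onto a dense open $\bar N^\circ \subset \bar N$. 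The change of variables yields
$$I_m(f_s) = \int_{\bar N^\circ} \d_s(p(\bar n))\, f_s(\bar n)\, J(\bar n)\, d\bar n$$
for an explicit Jacobian $J$.

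For convergence, the Iwasawa decomposition $G = P_\emptyset K$ gives $f_s(\bar n) = \d_s(p_{Iw}(\bar n))\, f(k_{Iw}(\bar n))$, and $f(k_{Iw}(\bar n))$ is bounded because $f \in ind^G_{P_\emptyset}(\1)$ has compact support modulo $P_\emptyset$. The remaining product of $\d_s$-factors decays along the positive coroot directions of $\bar N$ for $Re(s)$ large, by the same Gindikin--Karpelevich estimate that proves convergence of the standard intertwining integral $\int_{\bar N} \phi_s(\bar n)\, d\bar n$; this yields absolute convergence of $I_m(f_s)$ for $Re(s) \gg 0$.

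For the rational continuation, I would fix a small open compact subgroup $K_0 \subset G$ and restrict to $K_0$-fixed $f$. In coordinates on $\bar N^\circ$ along the simple coroots, $I_m(f_s)$ becomes a finite sum of products of one-variable $p$-adic integrals of the form $\int_{D_E} \mathbf{1}_A(x)\, |x|_E^{as+b}\, dx$, each of which is a rational function in $q^{-s}$ with denominator $1 - q^{-(as+b)}$. Taking $Q$ to be the product of these denominator factors---which depend only on the finite set of roots of $G$, not on the particular $f$---gives $Q(q^{-s})\, I_m(f_s) \in \C[q^{\pm s}]$ uniformly in $f \in ind^G_{P_\emptyset}(\1)$.

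The main obstacle I expect is the geometric analysis in the parametrization step: explicitly computing $J(\bar n)$ and describing the complement $\bar N \setminus \bar N^\circ$. The stabilizer $u_0^{-1} P_\emptyset u_0 \cap H$ is an anisotropic torus of $H$---for $m = 2$ it is isomorphic to $D_E^\times$ embedded non-trivially in $GL(2, D)$---so the parametrization does not reduce to a split-torus calculation. One must verify that the integrand retains the product structure along coroot directions needed for the geometric-series and Gindikin--Karpelevich arguments to apply uniformly in $f$.
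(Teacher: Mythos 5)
The paper's own proof of Proposition~\ref{linearform} is a one-line citation: it appeals to Theorems 2.8 and 2.16 of Blanc--Delorme \cite{BD} (convergence of the ``open orbit'' period and its rational continuation on a reductive $p$-adic symmetric space), together with Theorem 4(i) of Lagier \cite{L} to guarantee the hypothesis on the character $\eta$, noting that Remark 2.17 of \cite{BD} even makes Lagier's result avoidable in this particular case. Your proposal instead tries to reprove the convergence and the rational continuation from scratch by a Gindikin--Karpelevich-type argument. This is in the right spirit --- it is essentially the strategy behind the theory of intertwining periods of \cite{JLR}, \cite{LR}, and behind \cite{L} --- but as written it contains a genuine gap.

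The gap is at the rational continuation step. You assert that, after parametrizing the open orbit by a dense open $\bar N^\circ\subset \bar N$, the integral $I_m(f_s)$ ``becomes a finite sum of products of one-variable $p$-adic integrals,'' each contributing a denominator $1-q^{-(as+b)}$. For the standard intertwining integral $\int_{\bar N}\phi_s(\bar n)\,d\bar n$ this factorization is real, but it comes from the cocycle relation of intertwining operators and the rank-one reduction; it is a theorem about $\bar N$ as a product of root subgroups, \emph{integrated against a left-$P_\emptyset$-equivariant function}. The period integral $I_m(f_s)$ is structurally different: the domain is a quotient of $H$ by a non-split reductive stabilizer (for $m=2$, an embedded $D_E^\times$), the change of variables $h\mapsto \bar n(h)$ carries a non-trivial Jacobian that is not a product along roots, and the integrand $f_s(u_0 h)$ for general $f\in ind_{P_\emptyset}^G(\1)$ does not reduce, via Iwasawa decomposition, to a bounded function times a product of local factors over one-dimensional coroot directions --- you would have to produce such a factorization, not posit it. You flag this as ``the main obstacle,'' but that obstacle \emph{is} the theorem; the recursion/cocycle structure for periods that replaces the intertwining-operator cocycle is precisely the content of \cite{BD} Theorem 2.16 and \cite{L}, and without it the uniform-in-$f$ rationality claim is unsupported. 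The convergence estimate for $Re(s)\gg0$ is closer to standard, but even there the reduction to $\bar N$-coordinates and the control of $\delta_s(p(h))$ along the open orbit deserve more than an analogy with Gindikin--Karpelevich, since the open orbit here is parametrized not by the Bruhat cell of $G$ but by a twisted involution variety. In short: your route is plausible and, if completed, more self-contained than the paper's citation, but as it stands the key step is missing rather than merely unchecked.
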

\begin{proof}
This is a consequence of Theorems 2.8 and 2.16 of \cite{BD}, and the fact that the condition on "$\eta$" in 
[ibid.] is always satisfied by Theorem 4(i) of \cite{L}. In fact, in our particular situation, the general result of 
\cite{L} is not needed according to Remark 2.17 of \cite{BD}.
\end{proof}

\textit{From now on, and until the end of this paragraph, $m$ is even}. We start by 
the case $m=2$. In \cite{JLR}, this computation is done when $E$ is unramified over $F$. However, in Lemma 27 of \cite{JLR}, they explain another method which is in fact that of 7.6 in \cite{JL}, and which consists in writing the spherical vector as the integral of a Schwartz function. We recall it now.

\begin{prop}\label{m=2}
Suppose that $m=2$, then up to 
a unit in $\C[q^{\pm s}]$:
$$I_2(\phi_{2,s})=L(\1_{F^*},d(2s-1))/L(\eta_{E/F},2ds),$$ 
where $L$ is the usual Tate $L$-factor. In particular, $I_2(\phi_2)=I_2(\phi_{2,0})\neq 0$.
\end{prop}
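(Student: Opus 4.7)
The plan is to follow the Jacquet--Lai strategy indicated in Lemma 27 of \cite{JLR} and Section 7.6 of \cite{JL}: realize the spherical section $\phi_{2,s}$ as a Godement--Tate integral of a Schwartz function on $D_E^2$, and then unfold $I_2(\phi_{2,s})$ into a Tate-type integral over $F^\times$. Concretely, one takes $\Phi_0 = \mathbf{1}_{\mathcal{O}_{D_E}^2}$ and considers the standard section
$$f_{\Phi_0, s}(g) = \nu_E(g)^{\alpha(s)} \int_{D_E^\times} \Phi_0\bigl((0, x)g\bigr)\, \nu_E(x)^{\beta(s)} d^*x,$$
with exponents $\alpha(s), \beta(s)$ determined by the requirement that $f_{\Phi_0,s}$ transforms under $P_\emptyset$ by $\delta_{P_\emptyset}^s$. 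A direct Tate-style evaluation of $f_{\Phi_0,s}(e)$ produces, up to a unit in $\mathbb{C}[q^{\pm s}]$, the factor $L_E(\mathbf{1}_{E^*}, 2ds)$; by uniqueness of the spherical vector one then has $\phi_{2,s} = L_E(\mathbf{1}_{E^*}, 2ds)^{-1} f_{\Phi_0, s}$.

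Substituting into $I_2$ and interchanging integrals in the region of absolute convergence supplied by Proposition \ref{linearform}, and observing that $(0, x) u_0 = (x, x\lambda)$, one obtains
$$I_2(f_{\Phi_0, s}) = \int_{u_0^{-1}P_\emptyset u_0 \cap H\backslash H} \nu_E(u_0 h)^{\alpha(s)} \int_{D_E^\times} \Phi_0\bigl((x, x\lambda)h\bigr) \nu_E(x)^{\beta(s)} d^*x \, dh.$$
The orbit map $(x, h) \mapsto (x, x\lambda)h$ from $D_E^\times \times H$ to $D_E^2$ has Zariski-dense image, and its generic stabiliser identifies, up to a compact subgroup, with $u_0^{-1} P_\emptyset u_0 \cap H$. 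The change of variable therefore collapses the double integral to an integral of $\Phi_0$ over $D_E^2$ against a character of $D_E^\times$ which factors through $F^\times$ via $N_{E/F}\circ N_{rd,E}$; Tate's thesis then evaluates it as $L(\mathbf{1}_{F^*}, d(2s-1))$, up to units in $\mathbb{C}[q^{\pm s}]$. Finally, using the standard factorisation $L_E(\mathbf{1}_{E^*}, 2ds) = L(\mathbf{1}_{F^*}, 2ds)\, L(\eta_{E/F}, 2ds)$ and absorbing $L(\mathbf{1}_{F^*}, 2ds)$ into the units, one recovers the claimed identity.

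The main technical difficulty I anticipate is the bookkeeping of the exponents $\alpha(s), \beta(s)$ and of the Jacobian of the change of variable, with the index $d$ entering through the reduced norm in precisely the right power to yield $d(2s-1)$ in the numerator. For the non-vanishing at $s=0$: the factor $L(\mathbf{1}_{F^*}, -d) = (1-q^d)^{-1}$ is finite and nonzero, and $L(\eta_{E/F}, 0)$ equals $1/2$ when $E/F$ is unramified and $1$ when $E/F$ is ramified, again nonzero in both cases. Hence $I_2(\phi_2) = I_2(\phi_{2,0})$ is nonzero, as required.
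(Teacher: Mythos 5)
Your overall strategy is the right one -- following Jacquet--Lai, realise $\phi_{2,s}$ as the Godement--Tate integral $\nu_E(g)^{ds}\int_{D_E^\times}\Phi((0,x)g)\nu_E(x)^{2ds}\,d^*x\big/L(\1_{E^*},2ds)$ with $\Phi=\1_{O_{D_E}^2}$, then unfold $I_2$ by collapsing the two integrations -- and this is exactly how the paper proceeds. But the evaluation of the collapsed integral has a genuine error. After the collapse one is left with
$$I_2(\phi_{2,s})\,L(\1_{E^*},2ds)=\int_H\Phi\bigl((0,1)u_0h\bigr)\,\nu_F(u_0h)^{2ds}\,dh,$$
an integral over $H=GL(2,D)$, not over $D_E^2$ against a character of $D_E^\times$ factoring through $F^\times$. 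The observation you are missing -- and it is the heart of the computation -- is that for $h=\begin{pmatrix}a&b\\c&d\end{pmatrix}\in H$, one has $(0,1)u_0h=(a-\lambda c,\,b-\lambda d)$, and the integrality conditions $a-\lambda c\in O_{D_E}$, $b-\lambda d\in O_{D_E}$ cut out a translate $h_\lambda^{-1}\M(2,O_D)$ of the standard lattice in $\M(2,D)$. After a change of variable the integral is therefore the Godement--Jacquet zeta integral of the \emph{trivial representation of} $GL(2,D)$, which by the inductivity relation yields $L(\1_{F^*},2ds-d)\,L(\1_{F^*},2ds)$, a \emph{product of two} Tate factors, not a single one.

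This defeats your final step. You claim the collapsed integral produces only $L(\1_{F^*},d(2s-1))$ and then propose to ``absorb $L(\1_{F^*},2ds)$ into the units'' after factoring $L(\1_{E^*},2ds)=L(\1_{F^*},2ds)L(\eta_{E/F},2ds)$. But $L(\1_{F^*},2ds)^{-1}=1-q^{-2ds}$ is \emph{not} a unit in $\C[q^{\pm s}]$: it vanishes at $s=0$. Your formula, taken literally, would give $I_2(\phi_{2,s})$ a zero at $s=0$, contradicting the very non-vanishing statement you need. The correct mechanism is that the factor $L(\1_{F^*},2ds)$ coming from the Godement--Jacquet $L$-factor of $\1_{GL(2,D)}$ cancels \emph{exactly} against the factor $L(\1_{F^*},2ds)$ in the decomposition of $L(\1_{E^*},2ds)$, leaving $L(\1_{F^*},d(2s-1))/L(\eta_{E/F},2ds)$; the non-vanishing at $s=0$ then comes only from $\eta_{E/F}\neq\1_{F^*}$, as the paper states. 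So the gap is concrete: you must recognise the collapsed integral as a $GL(2,D)$-zeta integral (via the $\M(2,O_D)$ support computation) rather than as a rank-one Tate integral.
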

\begin{proof}
Call $\Phi$ the characteristic map of the lattice 
$O_{D_E}^2$ in $D_E^2$. Then the integral $$\nu_E(g)^{ds}\int_{D_E^*}\Phi((0,t)g)\nu_E(t)^{2ds}dt$$
 converges absolutely 
for $Re(s)$ large enough by the theory of Godement-Jacquet Zeta integrals (\cite{GJ}), it is in fact an element of $\C(q^{-s})$, 
and one has $$\phi_{2,s}(g)=\nu_E(g)^{ds}\int_{D_E^*}\Phi((0,x)g)\nu_E(x)^{2ds}dx/L(\1_{E^*},2ds).$$ 
Call $\sigma$ the involution $g\mapsto w_0^{-1}\overline{g}w_0$ which fixes $u_0Hu_0^{-1}$. The integral above can be viewed as an integral over ${M_{\emptyset}}^\sigma$:
 $$\phi_{2,s}(g)=\nu_E(g)^{ds}\int_{{M_{\emptyset}}^\sigma}\Phi((0,1)tg)\nu_F(t)^{2ds}dt/L(\1_{E^*},2ds),$$ 
hence $$\phi_{2,s}(u_0h)=\int_{u_0^{-1}\B u_0\cap H}\Phi((0,1)u_0t'h)\nu_F(u_0t'h)^{2ds}dt'/L(\1_{E^*},2ds).$$
Integrating over $u_0^{-1}\B u_0\cap H\backslash H$, one finally gets 
$$I_2(\phi_{2,s})=\int_{H}\Phi((0,1)u_0h)\nu_F(u_0h)^{2ds}dh/L(\1_{E^*},2ds).$$ 
If $h=\begin{pmatrix} a & b \\ c & d \end{pmatrix}$, then $(0,1)u_0h$ is equal to 
$(a-\l c,b-\l d)$, but the conditions $a-\l c \in O_{D_E}$ and $b-\l d \in O_{D_E}$ mean that 
$(a,c)$ and $(b,d)$ belong to the same right $O_D$ lattice of $D^2$. Hence if we denote by $\Phi_0$ 
the characteristic function of $\mathcal{M}(2,O_D)$, there is 
$h_\l \in H$ such that $\Phi((0,1)u_0h)=\Phi(h_\l h).$ In particular, after a change of variable,
 there is $\e(s)\in \C[q^{\pm s}]^\times$ such that 
$$I_2(\phi_{2,s})=\e(s)\int_{H}\Phi_0(h)\nu_F(h)^{2ds}dh/L(\1_{E^*},2ds)=$$ 
$$=\e(s)L(\1_{H},2ds+(1-2d)/2)/L(\1_{E^*},2ds)$$
$$=\e(s)L(\1_{D^*},2ds+(1-3d)/2)L(\1_{D^*},2ds+(1-d)/2)/L(\1_{E^*},2s)$$
$$= \e(s)L(\1_{F^*},2ds-d)L(\1_{F^*},2ds)/L(\1_{E^*},2ds)=\e(s)L(\1_{F^*},2ds-d)/L(\eta_{E/F},2ds).$$
Here we used the inductivity relation of the Godement-Jacquet $L$-factor $L(\1_{H},s)$. This quantity does 
not vanish at $0$ because $\eta_{E/F}\neq \1_{F^*}$.
\end{proof}

The general case can be deduced from this one

\begin{prop}\label{nonzero}
For $m=2r$, let $P$ be the standard parabolic subgroup $G$ of type $\overline{m}=(1,\dots,1,2,1,\dots,1)$, with 
$m_{r/2}=2$. Then there is $f$ in $ind_{P}^G(\1)$ such $I_m(f\phi_s)=I_2(\phi_{2,s})$. In 
particular, taking $s=0$, one has $I_m(f)=I_2(\phi_2)\neq 0$.
\end{prop}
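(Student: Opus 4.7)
The strategy is to reduce the integral $I_m(f\phi_s)$ to the rank-one computation $I_2(\phi_{2,s})$ of Proposition \ref{m=2} by choosing $f$ that exploits the compatible block structure of $u_0$ and of $P$. The key structural observation is that $u_0=u_{s_0}$ preserves each two-dimensional $D_E$-plane $\text{Vect}(e_i,e_{m+1-i})$ and acts on it by the $m=2$ matrix $u_{0,2}=\begin{pmatrix} 1 & -\l \\ 1 & \l \end{pmatrix}$; in particular, the middle pair-block $(e_r,e_{r+1})$ of $u_0$ is exactly $u_{0,2}$. The Levi of $P$ is $M\simeq (D_E^\times)^{r-1}\times GL(2,D_E)\times (D_E^\times)^{r-1}$, with the $GL(2,D_E)$-factor placed at positions $(r,r+1)$, so that the middle Levi block matches the middle pair-block of $u_0$.

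First, I would construct $f\in ind_P^G(\1)$ by mimicking the Godement--Jacquet recipe used in Proposition \ref{m=2}. Starting from a factorizable Schwartz function $\Phi = \Phi_{\text{out}}\otimes\Phi_{\text{mid}}\otimes\Phi_{\text{out}}'$ on the ``bottom rows'' of the three Levi blocks, with $\Phi_{\text{mid}}$ the characteristic function of $O_{D_E}^2$ used in Proposition \ref{m=2}, and $\Phi_{\text{out}},\Phi_{\text{out}}'$ chosen as characteristic functions supported on a small neighborhood of the identity on the outer blocks, I extract the corresponding $P$-equivariant section. Normalizing by the appropriate outer $L$-factors (absorbed as a unit in $\C[q^{\pm s}]^\times$) guarantees $f\in ind_P^G(\1)$.

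Second, I rewrite $I_m(f\phi_s)$ using the Iwasawa-type decomposition of $H$ with respect to $u_0^{-1}P_\emptyset u_0\cap H$ (so integration reduces to an integration over a compact set, once the Schwartz expression for $\phi_s$ is unfolded as in Proposition \ref{m=2}), and then use Fubini to split the remaining integral over the $r$ pair-blocks. The factorization of $\Phi$ together with the fact that $u_0$ acts diagonally on the pair-block decomposition causes the integrand to factorize as a product, block by block. The $r-1$ outer blocks contribute only Godement--Jacquet-type integrals of $\Phi_{\text{out}}$ and $\Phi_{\text{out}}'$ against $\nu_F^{?}$ on a compact neighborhood of the identity; with the normalization above, each such factor is a unit in $\C[q^{\pm s}]^\times$ evaluating to $1$. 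The middle pair-block contribution is, by construction, literally the integral computed in Proposition \ref{m=2}, yielding $I_2(\phi_{2,s})$.

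The main obstacle is the bookkeeping in the factorization step: although the pair-blocks are not block-diagonal in the standard sense (they couple rows/columns $i$ and $m+1-i$, which are non-adjacent), the modular character $\d_{P_\emptyset}^{1/2}$ and the reduced norm character $\nu_E$ on $u_0\cdot H$ must be shown to split cleanly into a product over the pair-blocks. This follows by tracking the action of the Weyl element $w_0$ and exploiting that $u_0^{-1}P_\emptyset u_0\cap H$ is contained in the Levi (rather than using the unipotent radical), which is where Proposition \ref{modulus2} together with the pair-block structure enters decisively. Once this decoupling is justified, the equality $I_m(f\phi_s)=I_2(\phi_{2,s})$ follows, and the nonvanishing at $s=0$ is inherited from Proposition \ref{m=2}.
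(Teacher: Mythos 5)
Your structural observation at the outset is correct and matches the paper's: $u_0$ acts block-diagonally on the pair-blocks $\text{Vect}(e_i,e_{m+1-i})$, the middle such block $(e_r,e_{r+1})$ coincides with the $GL(2,D_E)$-factor of the Levi $M_P$ of $P$ (note the statement's ``$m_{r/2}=2$'' is a typo for $m_r=2$), and $u_0$ restricted to that block is exactly $\begin{pmatrix}1&-\l\\1&\l\end{pmatrix}$. But the mechanism you propose to exploit this is not the paper's, and I do not think it works as stated.

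Your central claim is that, after unfolding $\phi_s$ into a Godement--Jacquet integral and choosing $f$ from a factorizable Schwartz function, the integral over $u_0^{-1}\B u_0\cap H\backslash H$ ``splits over the $r$ pair-blocks,'' with the outer blocks contributing units. This step does not go through: the group $H=GL(m,D)$ does not preserve the pair-block decomposition of $V=D^m$, so for a general $h\in H$ the quantity $\Phi(\cdots u_0 h)$ couples all the pair-blocks, and there is no block-by-block factorization of the $H$-integral. The presence of a factorizable test function $\Phi$ does not help, because $h$ mixes the blocks before $\Phi$ is applied. Moreover, the Godement--Jacquet ``bottom-row'' recipe used in Proposition \ref{m=2} is specific to $GL(2)$ sections; for the parabolic $P$ in higher rank the recipe you gesture at (Schwartz functions on ``bottom rows of the Levi blocks'') is not a well-defined way to produce elements of $ind_P^G(\1)$, and no construction is actually given.

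The paper's proof sidesteps all of this by a much cheaper idea, and I think you should absorb it because it is the crux. Instead of any Schwartz-function construction, one takes $f$ to be the characteristic function of $P u_0 L_k$, where $L_k=H\cap u_0^{-1}(1+\w^k\mathcal{M}(m,O_{D_E}))u_0$ is a small compact open subgroup of $GL(m,O_{D_E})$. Then, using the chain $u_0^{-1}\B u_0\cap H\subset u_0^{-1}P u_0\cap H\subset H$ (all reductive, hence unimodular, so Fubini applies without modular-character corrections), one writes $I_m(f_s)$ as an iterated integral. Left $P$-invariance of $f$ lets the outer integrand be $f(u_0 h)$ times the inner integral over $u_0^{-1}\B u_0\cap H\backslash u_0^{-1}Pu_0\cap H$; the support condition on $f$ then restricts the outer variable to $L_k\subset K$, and the $K$-invariance of the spherical function $\phi_s$ makes the inner integral constant there. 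So the outer integral contributes only a volume, and the inner integral, after identifying $u_0^{-1}Pu_0\cap H\simeq (D_E^*)^{r-1}\times GL(2,D)$ and $u_0^{-1}\B u_0\cap H$ with the corresponding torus, is precisely $I_2(\phi_{2,s})$ on the middle $GL(2,D)$-block. No factorization of the full $H$-integral is needed; the point is that a well-chosen $f$ with tiny support trivializes the ``mixing'' part of the integral, and only the middle block survives. Your proposal, by contrast, would require the $H$-integral itself to factor, which it does not.
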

\begin{proof}
We set $u=u_0$. Let $\w$ be a uniformiser of $D_E$, and take $k$ large enough such that 
$L_k=H\cap u^{-1}(1+\w^k \mathcal{M}(m,O_{D_E}))u$ is a compact open subgroup of $GL(m,O_{D_E})$. We take $f$ in $\mathcal{C}_c(P\backslash PuH)$ which is the characteristic function $PuL_k$. We notice that both groups 
$u^{-1}\B u\cap H$ and $u^{-1}Pu\cap H$ are reductive, hence unimodular. Then, for $Re(s)$ large enough, one has 
$$I_m(f_s)=\int_{u^{-1}\B u\cap H\backslash H}f(uh)\phi_s(uh)dh$$
$$=\int_{u^{-1}Pu\cap H\backslash H}\int_{u^{-1}\B u\cap H\backslash u^{-1}Pu\cap H} f(uph)\phi_s(uph)dpdh$$
$$=\int_{u^{-1}Pu\cap H\backslash H}f(uh)(\int_{u^{-1}\B u\cap H\backslash u^{-1}Pu\cap H} \phi_s(uph)dp)dh$$
$$=\int_{u^{-1}Pu\cap L_k\backslash L_k}(\int_{u^{-1}\B u\cap H\backslash u^{-1}Pu\cap H} \phi_s(up)dp)dh$$
because $f$ is left $P$-invariant and $L_k\subset GL(m,O_{D_E})$. But the latter integral is a positive multiple of 
$$\int_{u^{-1}\B u\cap H\backslash u^{-1}Pu\cap H} \phi_s(up)dp,$$ 
which is in fact the integral $I_2(\phi_{2,s})$ on $GL(2,D)$ considered in Proposition \ref{m=2}. It thus follows 
from Proposition \ref{m=2} that $I_m(f)$ is nonzero.
\end{proof}

\subsection{Distinction of Steinberg representations}

We now proceed as in Section \ref{dist1}. First, we have the following proposition, which is proved 
in a similar manner to Proposition \ref{2poss}, so we only sketch the proof.

\begin{prop}\label{onedirection}
If $St(\chi)$ is distinguished, then $\chi_{|F^*}=\1$ or $\eta_{E/F}$, and 
$Hom_H(St(\chi),\1)$ is of dimension at most one. If $m$ 
is odd, then $\chi_{|F^*}=\1$.
\end{prop}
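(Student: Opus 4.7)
The plan is to follow closely the strategy of Proposition~\ref{2poss}, now using the orbit description from Proposition~\ref{representatives2} and the modulus computation from Proposition~\ref{modulus2}. First, a nonzero $H$-invariant form on $St(\chi)$ lifts to a nonzero $H$-invariant form on $ind_{P_\emptyset}^G(\mu_\chi)$. Running the Mackey filtration of Proposition~\ref{filtr} against the representatives $\{u_s : s\in J((1,\dots,1))\}$, at least one sub-quotient $\mathcal{C}_c^\infty(P_\emptyset\backslash P_\emptyset u_s H, \mu_\chi)$ must support a nonzero $H$-equivariant form, and by Frobenius reciprocity (Proposition~\ref{Frob}) combined with Proposition~\ref{modulus2}, this yields the identity
$$\mu_\chi|_{M_s^{\sigma_s}} = \delta_{P_\emptyset}^{1/2}|_{M_s^{\sigma_s}},$$
where $P_s=P_\emptyset$ by minimality.

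Next I would unravel this identity. For $\overline m=(1,\dots,1)$, each $s\in J(\overline m)$ is a symmetric permutation matrix and therefore corresponds to an involution $\tau_s$ of $\{1,\dots,m\}$; the group $M_s^{\sigma_s}$ consists of diagonal matrices $diag(a_1,\dots,a_m)$ with $a_i\in D^*$ when $\tau_s(i)=i$ and $a_{\tau_s(i)}=\overline{a_i}$ otherwise. Plugging this in, the displayed equality becomes
$$\chi(N_{rd,F}(g))=|N_{rd,F}(g)|_F^{d(m-2i+1)}\quad (g\in D^*)$$
for each fixed point $i$ of $\tau_s$, and
$$\chi(z)\,\overline{\chi}(z)=|z|_E^{d(m-i-j+1)}\quad (z\in E^*)$$
for each pair $(i,j)$ with $i<j$ swapped by $\tau_s$; here I use the surjectivity of $N_{rd,F}\colon D^*\to F^*$ and $N_{rd,E}\colon D_E^*\to E^*$. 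Compatibility (among multiple fixed points, among multiple pairs, and between the fixed-point and pair constraints restricted to $F^*$) is an arithmetic check that forces every pair sum to equal $m+1$ and places the fixed point, when present, at $(m+1)/2$; in other words, $\tau_s$ must be the anti-diagonal involution $i\mapsto m+1-i$.

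Feeding this back, the pair relation collapses to $\chi\cdot(\chi\circ\theta)=\mathbf 1$, equivalently $\overline{\chi}=\chi^{-1}$, which means $\chi|_{F^*}$ is trivial on $N_{E/F}(E^*)$ and therefore equals $\mathbf 1$ or $\eta_{E/F}$. When $m$ is odd, the fixed point $i_0=(m+1)/2$ of $\tau_s$ must contribute its relation, which reads $\chi|_{F^*}(y)=|y|_F^0=1$, hence $\chi|_{F^*}=\mathbf 1$. For the multiplicity one statement, the very same analysis shows that only the anti-diagonal orbit $P_\emptyset u_0 H$ can support a nonzero $H$-equivariant linear form; and since
$$\mathcal{C}_c^\infty(P_\emptyset\backslash P_\emptyset u_0 H, \mu_\chi)\simeq ind_{u_0^{-1}P_\emptyset u_0\cap H}^{H}(\mu_\chi),$$
this space is at most one-dimensional by Frobenius reciprocity, giving the bound on $\dim\mathrm{Hom}_H(St(\chi),\mathbf 1)$.

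The main obstacle I anticipate is the combinatorial compatibility step pinning $\tau_s$ to the anti-diagonal involution: one has to match the unramified exponents coming from distinct fixed points, those coming from distinct pairs, and also the fixed-point exponents with (the restriction to $F^*$ of) the pair exponents, using that unramified and quadratic parts of $\chi|_{F^*}$ are determined independently. The arithmetic works out cleanly, but it has to be carried out uniformly in both parities of $m$ to yield the sharper conclusion when $m$ is odd.
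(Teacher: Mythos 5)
Your proposal is correct and follows essentially the same route as the paper's (sketched) proof: lift the form to $ind_{P_\emptyset}^G(\mu_\chi)$, run the Mackey filtration against the $u_s$ from Proposition~\ref{representatives2}, use Frobenius reciprocity and Proposition~\ref{modulus2} to obtain the character identity on $M_s^{\sigma_s}$, and observe that the exponent-matching forces $\tau_s$ to be the anti-diagonal involution, from which $\overline\chi=\chi^{-1}$ (hence $\chi_{|F^*}\in\{\1,\eta_{E/F}\}$), the extra triviality from the fixed point $(m+1)/2$ when $m$ is odd, and multiplicity one on the unique surviving orbit all drop out. You merely spell out the combinatorial compatibility argument that the paper leaves implicit in the phrase ``all other $H$-sub-quotients can't be distinguished.''
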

\begin{proof}
Suppose that $St(\chi)$ is distinguished, then a nonzero $H$-invariant 
linear form $L$ on $St(\chi)$ inflates to a linear form (still denoted $L$) on $ind_{\B}^G(\chi)$. Whether $m$ is even or odd, applying Mackey theory and Frobenius Reciprocity law, one obtains that $L$ does not vanish on 
$\mathcal{C}_c(\B\backslash \B u_0 H,\mu_\chi)\simeq ind_{u_0^{-1}\B u_0 \cap H}^H(\mu_\chi)$, as all other 
$H$-sub-quotients of $ind_{\B}^G(\chi)$ can't be distinguished. Frobenius Reciprocity for $ind_{u_0^{-1}\B u_0 \cap H}^H(\mu_\chi)$ allows to conclude on the value of $\chi_{|F^*}$ and on multiplicity one at the same time. 
\end{proof}

For any $s_0\in \C$, there is $l_{s_0}\in \Z$ such that the 
linear form \begin{equation}\label{l_0} L_{s_0}:f_{s_0}\mapsto \underset{s\rightarrow s_0}{lim}(1-q^{s_0-s})^{l_{s_0}} I_m(f_s)\end{equation} is nonzero on 
$ind_{P_\emptyset}^G(\d_{s_0})$, which is thus distinguished. In fact, for any character $\chi$ which restricts trivially 
to $F^*$, the linear map $L_{s_0}$ is still $H$-invariant on 
$\mu_\chi\otimes ind_{P_\emptyset}^G(\d_{s_0})=ind_{P_\emptyset}^G(\mu_\chi\d_{s_0})$. We deduce as in the 
proof of Proposition \ref{infact1}, the following statement, the proof of which we sketch again. 

\begin{thm}\label{odd}
If $m$ is odd and $\chi_{|F^*}=\1$, then $St(\chi)$ is distinguished, hence $St(\chi)$ is distinguished if and only if $\chi_{|F^*}=\1$.
\end{thm}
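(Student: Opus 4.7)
By Proposition \ref{onedirection}, the forward implication and the multiplicity one bound are already proved (since $\chi_{|F^*} = \1$ for $m$ odd whenever $St(\chi)$ is distinguished). So the plan is to prove the converse: assuming $m$ is odd and $\chi_{|F^*} = \1$, I will produce a nonzero $H$-invariant linear form on $St(\chi)$. The strategy mirrors Theorem \ref{main1}, but the present situation differs in one important respect: in the odd-$d$ case the Borel $P_\emptyset$ is $\sigma$-stable rather than $\sigma$-split, so the distinction of $ind_{P_\emptyset}^G(\mu_\chi)$ cannot be obtained directly as in Proposition \ref{infact1}, and must instead be produced through the analytic continuation of Proposition \ref{linearform}.

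Concretely, I take $L_0$ to be the linear form from \eqref{l_0} at $s_0 = 0$, twisted by $\mu_\chi$ (the twist is permitted because $\chi_{|F^*} = \1$ implies $\chi\bar\chi = \1$), obtaining a nonzero element of $Hom_H(ind_{P_\emptyset}^G(\mu_\chi), \1)$. The main task is then to show that $L_0$ factors through $St(\chi)$, i.e., vanishes on every $ind_P^G(\mu_\chi)$ where $P$ is a minimal parabolic strictly containing $P_\emptyset$, hence of type $\overline{n} = (1, \ldots, 1, 2, 1, \ldots, 1)$ with $n_i = 2$. It suffices to show none of these $ind_P^G(\mu_\chi)$ is itself $H$-distinguished.

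Suppose for contradiction that $ind_P^G(\mu_\chi)$ is distinguished. By Proposition \ref{mackey}, there is $s \in J(\overline{n})$ with $\mu_\chi|_{M_s^{\sigma_s}} = \delta_{P_s}^{1/2}|_{M_s^{\sigma_s}}$, and necessarily $P_s \in \{P, P_\emptyset\}$. The case $P_s = P$ forces $m_{i,i} = 2$; on the resulting block $GL(2, D)$ one has $\mu_\chi = \1$ (since $N_{rd, E}$ restricts to $N_{rd, F}$ on $GL(2,D)$ and $\chi_{|F^*} = \1$), while the $\nu_E$-exponent of $\delta_P$ at block $i$ is $d(m - 2i)$, so $\delta_P^{1/2}$ restricts to $\nu_F^{d(m - 2i)}$ via $\nu_E|_{F^*} = \nu_F^2$; since $m$ is odd, $m \neq 2i$, and this is nontrivial, a contradiction.

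The harder case, which I expect to be the main obstacle, is $P_s = P_\emptyset$. Here $\tau_s$ is an involution of $\{1, \ldots, m\}$ (the refined block positions), and evaluating $\mu_\chi$ against $\delta_{P_\emptyset}^{1/2}$ block by block (on diagonal blocks using $\nu_E|_{F^*} = \nu_F^2$, on swapped pairs using $\chi\bar\chi = \1$) forces every fixed point of $\tau_s$ to equal $(m+1)/2$ and every swapped pair $(k, k^*)$ to satisfy $k + k^* = m+1$; thus $\tau_s$ must be the longest involution $k \mapsto m+1-k$. To conclude, I will use the explicit description of $\tau_s$ in terms of $s$ (swapping refined blocks at transposed positions $(a,b)$ and $(b,a)$) together with the row-major indexing of refined blocks, to rule out this possibility. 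Since row $i$ of $s$ sums to $2$, it has essentially three sub-shapes (two off-diagonal $1$'s at columns $j_1 < j_2$, or a diagonal $1$ plus an off-diagonal $1$), and in each sub-shape a case analysis on the ordering of $i$ with respect to the column indices translates ``$\tau_s$ is the longest involution'' into a pair of linear equations whose unique solutions violate the ordering (for instance, in the subcase $i < j_1 < j_2$ one obtains $j_1 = m - i$ and $j_2 = m - i - 1$, contradicting $j_1 < j_2$; the other orderings and sub-shapes yield analogous contradictions, including the impossibility of a diagonal fixed point of $\tau_s$ in row $i$ since $i \neq (m+1)/2$ can always be arranged or leads immediately to $j$-constraints that fail). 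Once this combinatorial verification is complete, no $s \in J(\overline{n})$ satisfies the modulus condition, $ind_P^G(\mu_\chi)$ is not distinguished, $L_0$ descends to $St(\chi)$, and the theorem is proved.
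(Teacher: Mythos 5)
Your proposal is correct and follows the same strategy as the paper's proof, which simply invokes the argument of Proposition \ref{infact1}: produce $L_0$ by analytic continuation (valid since $\mu_\chi|_H=\1$ when $\chi_{|F^*}=\1$), then show via Mackey theory and the modulus comparison of Proposition \ref{mackey} that no $ind_P^G(\mu_\chi)$ for next-to-minimal $P$ can be $H$-distinguished, so $L_0$ descends to $St(\chi)$. You actually spell out more detail than the paper: the paper's $P_s=P$ case used the sign of $\eta_{E/F}$, whereas here you correctly replace that by the observation that $\delta_P^{1/2}$ is nontrivial on the $GL(2,D)$ block because $m$ is odd (so $m\ne 2i$), and your $P_s=P_\emptyset$ analysis (forcing $\tau_s$ to be the longest element and deriving $j_1>j_2$-type contradictions from row $i$) is the right combinatorics; the only nit is that the correct justification for twisting by $\mu_\chi$ is that $\mu_\chi$ restricts trivially to $H$ (since $N_{rd,E}|_H=N_{rd,F}$ has image in $F^*$), rather than $\chi\bar\chi=\1$ per se, though the two are of course consequences of the same hypothesis.
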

\begin{proof}
Suppose that $m$ is odd, and $\chi_{|F^*}=\1$. As in the proof of Proposition \ref{infact1}, one sees, using Frobenius reciprocity, that 
every representation $ind_{P}^G(\mu_\chi)$ can't be distinguished, for any standard parabolic subgroup $P$ of type 
$\overline{n}$, with all $n_i$'s equal to $1$, except one which is $2$. The linear form $L_0$ ($L_{s_0}$ with $s_0=0$) thus descends to $St(\chi)$, which is thus distinguished.
\end{proof}

Now we focus on the even case. The following is proved again just as Proposition \ref{infact1}, we omit the proof.

\begin{prop}\label{versace}
If $m$ is even, and $\chi_{|F^*}=\eta_{E/F}$, then $St(\chi)$ is distinguished.
\end{prop}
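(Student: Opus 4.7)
The plan is to mirror the proof of Proposition \ref{infact1} in the present setting ($d$ odd, $m$ even, $\chi_{|F^*}=\eta_{E/F}$). The argument proceeds in two halves: first, producing a nonzero $H$-invariant linear form $L$ on $ind_{P_\emptyset}^G(\mu_\chi)$ via the Blanc--Delorme criterion; then showing that $L$ vanishes on each $ind_P^G(\mu_\chi)$ appearing in the defining sum of $St(\chi)$, so that $L$ descends to $St(\chi)$.

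For the first half, I would invoke Theorem 2.8 of \cite{BD}. The open $P_\emptyset$-$H$ double coset is represented by $u_0$, and the hypothesis reduces to checking that $\mu_\chi\,\delta_{P_\emptyset}^{-1/2}$ is $M_\emptyset^{\sigma_{u_0}}$-distinguished. Elements of $M_\emptyset^{\sigma_{u_0}}$ are diagonal matrices of the form $diag(g_1,\ldots,g_{m/2},\overline{g_{m/2}},\ldots,\overline{g_1})$; on these, $\delta_{P_\emptyset}^{1/2}$ is trivial by cancellation of the symmetric exponents of $\nu_E$ at positions $i$ and $m+1-i$, and $\mu_\chi$ is trivial because
$\chi(N_{rd,E}(g_i))\chi(\overline{N_{rd,E}(g_i)})=\chi(N_{E/F}(N_{rd,E}(g_i)))=\eta_{E/F}(N_{E/F}(N_{rd,E}(g_i)))=1$
under the hypothesis $\chi_{|F^*}=\eta_{E/F}$.

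For the second half, let $P$ be a standard parabolic of type $(1,\ldots,1,2,1,\ldots,1)$ with the $2$ in position $j$. By the $d$-odd analogue of Proposition \ref{mackey} (immediate from Proposition \ref{modulus2}), distinction of $ind_P^G(\mu_\chi)$ requires some $s\in J(\overline{m})$ with $\mu_\chi$ and $\delta_{P_s}^{1/2}$ agreeing on $M_s^{\sigma_s}$. Two cases arise. If $m_{j,j}=2$, so that $P_s=P$, the $j$-th factor of $M_s^{\sigma_s}$ is $GL(2,D_E)^\theta=GL(2,D)$, on which $\mu_\chi$ restricts to $\eta_{E/F}\circ N_{rd,F}$; since the reduced norm is surjective from $GL(2,D)$ onto $F^\times$ and $\eta_{E/F}$ is nontrivial, this takes the value $-1$ somewhere, whereas $\delta_{P_s}^{1/2}$ is positive -- contradiction. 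If $m_{j,j}<2$, so that $P_s=P_\emptyset$, running the argument from the proof of Proposition \ref{2poss} on each $\tau_s$-orbit of $M_\emptyset^{\sigma_s}$ forces $\tau_s$ to be the antipodal involution $q\mapsto m+1-q$; however, the explicit construction of $\tau_s$ in Section \ref{rep2} (identity on each $I_{i,i}$, order-preserving swap of $I_{i,j}$ with $I_{j,i}$) is incompatible with the antipodal map when $m$ is even -- the lack of fixed points forces all $m_{i,i}=0$, and the order-preserving rule then prevents the two flat indices of the size-$2$ row from mapping to their antipodals in distinct singleton rows.

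Hence $L$ vanishes on each such $ind_P^G(\mu_\chi)$, and descends to a nonzero $H$-invariant linear form on $St(\chi)$. The principal technical obstacle is the combinatorial check in the $P_s=P_\emptyset$ case: one must carefully examine how the natural ordering on the intervals $I_{i,j}$ within $\{1,\ldots,m\}$ interacts with the antipodal symmetry to rule out an antipodal $\tau_s$ arising from a split of the size-$2$ block.
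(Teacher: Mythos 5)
Your proposal is correct and follows exactly the route the paper intends: it replicates the two-step argument of Proposition \ref{infact1} (Blanc--Delorme for distinction of $ind_{P_\emptyset}^G(\mu_\chi)$, then Mackey/Frobenius to show no $ind_P^G(\mu_\chi)$ in the defining sum can be distinguished), with the necessary $d$-odd adjustment that the relevant $\sigma$-split minimal parabolic is $u_0^{-1}P_\emptyset u_0$ rather than $P_\emptyset$ itself. One small point worth making explicit in the $P_s=P_\emptyset$ subcase: when some $m_{i,i}=1$ the involution $\tau_s$ has a fixed point, and this is eliminated not by the antipodal-exponent computation but by the same sign argument you use for $m_{j,j}=2$ (on a fixed point $\mu_\chi$ restricts to $\eta_{E/F}\circ N_{rd,F}$ times a positive character, whereas $\delta^{1/2}$ is positive), after which the fixed-point-free case yields the antipodal constraint, and then the order-preserving rule gives $\tau_s(p)<\tau_s(p+1)$ on the two positions $p,p+1$ of the size-$2$ block while antipodality would force $\tau_s(p)>\tau_s(p+1)$.
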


Finally, we obtain, thanks to the results of Section \ref{intertwiningperiods}, the main result 
when $m$ is even.

\begin{thm}\label{main2}
If $m$ is even, then $St(\chi)$ is distinguished if and only if $\chi_{|F^*}=\eta_{E/F}$.
\end{thm}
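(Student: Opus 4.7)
The ``if'' direction is Proposition \ref{versace}, so the task reduces to the converse: if $m$ is even and $St(\chi)$ is distinguished, then $\chi_{|F^*}=\eta_{E/F}$. Proposition \ref{onedirection} already confines $\chi_{|F^*}$ to $\{\1,\eta_{E/F}\}$, so it suffices to rule out the case $\chi_{|F^*}=\1$. I plan to mimic the strategy of Theorem \ref{main1}, replacing the na\"ive open-cell integral used there by the meromorphically continued integral $I_m$ of Proposition \ref{linearform}, and then invoking the explicit non-vanishing of Proposition \ref{nonzero} to produce a contradiction.

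Assume therefore that $\chi_{|F^*}=\1$ and that $St(\chi)$ carries a nonzero $H$-invariant form $L$. Lifting $L$ to $ind_\B^G(\mu_\chi)$ and running the same Mackey/Frobenius analysis as in Proposition \ref{onedirection}, any such form must restrict non-trivially to the open-cell subspace $\mathcal{C}_c^\infty(\B\backslash\B u_0 H,\mu_\chi)$ and is unique up to scaling. Since $\chi_{|F^*}=\1$, the character $\mu_\chi$ is trivial on $H=GL(m,D)$, so the tensoring isomorphism $ind_\B^G(\mu_\chi)\simeq \mu_\chi\otimes ind_\B^G(\1)$ is $H$-equivariant, and the form $L_0$ from \eqref{l_0} (taken at $s_0=0$) transports to a nonzero $H$-invariant form on $ind_\B^G(\mu_\chi)$. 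By the uniqueness statement above, it must coincide with $L$ up to a nonzero scalar.

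The key technical step — and the one I expect to be the main obstacle — is to show that the integer $l_0$ in \eqref{l_0} is actually zero, so that $L_0$ agrees on the open cell with the absolutely convergent integral. For any $f\in ind_\B^G(\1)$ with support compact modulo $\B$ inside the open cell, $I_m(f_s)$ converges absolutely for every $s$ and is a polynomial in $q^{\pm s}$; if $l_0$ were positive, the defining limit would annihilate $L_0$ on the whole open-cell subspace, and since this is the only sub-quotient of $ind_\B^G(\1)$ supporting an invariant form, $L_0$ would vanish identically, contradicting its non-triviality. Thus $l_0=0$ and $L_0(f)=I_m(f)$ on such functions.

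Finally comes the contradiction. Let $P$ be the standard parabolic of $G$ of type $(1,\dots,1,2,1,\dots,1)$ with the $2$ placed in the middle so that the entries sum to $m=2r$; Proposition \ref{bigcell} guarantees that $PH=P_\emptyset H$ is open (because $u_0^{-1}\B u_0$ is $\theta$-split), and since $\B\backslash P$ is compact, the explicit $f\in ind_P^G(\1)$ supplied by Proposition \ref{nonzero} lifts to a function of $ind_\B^G(\1)$ with support compact modulo $\B$ inside the open cell, satisfying $I_m(f)=I_2(\phi_2)\neq 0$. Its twist $\mu_\chi\cdot f$ then lies in $ind_P^G(\mu_\chi)\subset ind_\B^G(\mu_\chi)$, and the two steps above yield $L(\mu_\chi\cdot f)$ proportional to $I_m(f)\neq 0$. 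But $L$ descends from $St(\chi)$, hence must annihilate every $ind_P^G(\mu_\chi)$ entering the definition of $St(\chi)$ — contradiction. The multiplicity-one assertion is already part of Proposition \ref{onedirection}.
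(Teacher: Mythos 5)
You follow the paper's strategy faithfully: Proposition \ref{versace} for the ``if'' direction, Proposition \ref{onedirection} to reduce the converse to ruling out $\chi_{|F^*}=\1$, identification of $L$ with $L_0$ by uniqueness, the argument that $l_0=0$, and the contradiction via Proposition \ref{nonzero}. Your $l_0=0$ step is correct and spelled out in slightly more detail than the paper gives.

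The final paragraph, however, contains a genuine (though repairable) gap. You assert that the $f$ supplied by Proposition \ref{nonzero} has support compact modulo $\B$ ``inside the open cell'', meaning $\B u_0 H$, so that $L_0(f)=I_m(f)$ follows from the polynomial-ness reasoning you have just used. But $f$ is supported in $Pu_0L_k\subset Pu_0H$, and $Pu_0H$ \emph{strictly} contains $\B u_0H$: when $m_r=2$, the open $P$-orbit $Pu_0H$ is the union of two $\B$-orbits (the $(r,r)$-entry of $\pi(s_0)\in J(\overline{m})$ can be refined either anti-diagonally or diagonally in $J(1,\dots,1)$). Accordingly $I_m(f_s)$ is \emph{not} a Laurent polynomial in $q^s$ --- Propositions \ref{m=2} and \ref{nonzero} compute it to be $L(\1_{F^*},d(2s-1))/L(\eta_{E/F},2ds)$ up to a unit --- and your argument for $L_0(f)=I_m(f)$ does not transfer to $f$. (Your assertion that ``$PH=P_\emptyset H$ is open'' is also off: $\B$ is $\theta$-stable, not $\theta$-split, so neither $P_\emptyset H$ nor $PH$ is open; the open sets in play are $\B u_0 H$ and $Pu_0H$, and these are unequal.) The correct bridge, as in the paper, is that once $l_0=0$ the linear form $L_0(g)=\lim_{s\to 0}I_m(g_s)$ is, for any $g\in ind_{\B}^G(\1)$, the value at $s=0$ of the meromorphic continuation of $I_m(g_s)$; and the explicit $L$-factor formula of Proposition \ref{m=2} shows that for the $f$ of Proposition \ref{nonzero} this continuation is holomorphic and nonzero at $s=0$. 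With that substitution your proof closes exactly as the paper's does.
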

\begin{proof}
It remains to show that if $St(\chi)$ is distinguished, then $\chi_{|F^*}=\eta_{E/F}$. According to Proposition 
\ref{onedirection}, it is enough to show that if  $\chi_{|F^*}=\1$, then $St(\chi)$ isn't distinguished. In order to obtain 
a contradiction, suppose that it is. Then the linear form on $St(\chi)$ inflates to $ind_{\B}^G(\mu_\chi)$, hence must be equal (up to a nonzero scalar) 
to $L_0$, because $Hom_H(ind_{\B}^G(\mu_\chi),\1)$ is one dimensional according to the proof of Proposition \ref{onedirection}. 
Moreover, we also know from the proof of Proposition \ref{onedirection} that $L_0$ restricts non trivially to 
$\mathcal{C}_c(\B\backslash \B u_0 H,\mu_\chi)$. In particular, the integer $l_0$ (see before Theorem \ref{odd}) must be equal to $0$, i.e. 
one has $L_0=I_m$ up to a nonzero scalar. Now if we apply Proposition \ref{nonzero}, we obtain that $L_0(\mu_\chi \otimes f)=
I_m(\mu_\chi \otimes f)=\mu_\chi(u_0)I_m(f)\neq 0$. This is absurd 
as $L_0$ must vanish on $ind_P^G(\mu_\chi)$ (for $P$ as in Proposition \ref{nonzero}), because it descends to $St(\chi)$.
\end{proof}

\begin{rem}
When $F$ has charactersitic zero, it is a consequence of the global results in \cite{F} and \cite{FH} that the inverse of the Jacquet-Langlands correspondence (\cite{DKV}, \cite{Bad}) sends distinguished cuspidal representations of $GL(md,E)$ to distinguished representations of $GL(m,D\otimes E)$. Without restriction on the characteristic, when the cuspidal representation has level zero, there is also an explicit proof of this result using type theory in \cite{Con}. It follows at once from Theorem 3.15 of \cite{Bad07} applied to the trivial representation, that the result of this paper says that a Steinberg representation of $GL(m,D\otimes E)$ is distinguished if and only if its image by the Jacquet-Langlands correspondence is.
\end{rem}


\begin{thebibliography}{9}

\bibitem[A-R.05]{AR} U.K. Anandavardhanan, C.S. Rajan, \textit{Distinguished representations, base change, 
and reducibility for unitary groups}. Int. Math. Res. Not. 2005, no. 14, 841-854. 

\bibitem[B.02]{Bad} I. Badulescu, \textit{Correspondance de Jacquet-Langlands pour les corps locaux de 
caractéristique non-nulle}, Ann. Sc. E.N.S. (4), 35 (2002), no. 5, 695-747.

\bibitem[B.07]{Bad07} I. Badulescu, \textit{Jacquet-Langlands et unitarisabilité}. J. Inst. Math. Jussieu 6 (2007), no. 3, 349-379.

\bibitem[BZ.76]{BZ76} J. N. Bernstein and A.V. Zelevinsky, \textit{Representations of the group $GL(n,F),$ where $F$ is a local non-Archimedean field}. Uspehi Mat. Nauk 31 (1976), no. 3(189), 5-70.

\bibitem[BZ.77]{BZ} J. N. Bernstein and A.V. Zelevinsky, \textit{Induced representations of
 reductive p-adic groups}, Ann. Sc. E.N.S., 1977.

\bibitem[B-D.08]{BD} P. Blanc and P. Delorme, \textit{Vecteurs distributions
$H$-invariants de représentations induites, pour un espace symétrique réductif p-adique $G/H$}, Ann. Inst. Fourier (Grenoble), 
58 (2008), no. 1, 213-261.

\bibitem[B.14]{B14} P. Broussous, \textit{Distinction of the Steinberg representation}. With an appendix by F. Courtès. 
Int. Math. Res. Not. IMRN 2014, no. 11, 3140-3157.

\bibitem[C.15]{C15} F. Courtès, \textit{Distinction of the Steinberg representation II: an equality of characters}. 
Forum Math. 27 (2015), no. 6, 3461-3475.

\bibitem[Con.13]{Con} C. Coniglio-Guilloton, \textit{Correspondance de Jacquet-Langlands et distinction: 
cas des représentations cuspidales de niveau 0}, http://arxiv.org/abs/1309.0353.

%\bibitem[Con.14]{Con14} C. Coniglio-Guilloton, \textit{Correspondance de Jacquet-Langlands et distinction: 
%cas de certaines series discretes non cuspidales de niveau 0}, http://arxiv.org/abs/1405.5638.

\bibitem[C]{C} F. Courtès, \textit{Distinction of the Steinberg representation III: the tamely ramified case}, 
http://arxiv.org/abs/1408.6656.

\bibitem[D-K-V.84]{DKV} P. Deligne, D. Kazhdan, M.-F. Vigneras, \textit{Représentations des algèbres centrales simples 
$p$-adiques}. Representations of reductive groups over a local field, 33-117, Travaux en cours, Hermann, Paris, 1984.

\bibitem[FLO.12]{FLO} B. Feigon, E. Lapid, O. Offen, \textit{On representations distinguished by unitary groups}. 
Publ. Math. Inst. Hautes Études Sci. 115 (2012), 185-323. 

\bibitem[F.87]{F} Y. Flicker, \textit{Relative trace formula and simple algebras}, Proc. Amer. Math. Soc. 99 (1987), 
no. 3, 421-426.

\bibitem[F-H.94]{FH} Y. Flicker, J. Hakim, \textit{Quaternionic distinguished representations}, Amer. J. Math. 116 (1994), no. 3, 683-736.

\bibitem[G-J.72]{GJ} R. Godement and H. Jacquet, \textit{Zeta functions of simple algebras}. Lecture Notes in Mathematics,
Vol. 260. Springer-Verlag, Berlin-New York, 1972.

\bibitem[G.15]{G} M. Gurevich, \textit{On a local conjecture of Jacquet, ladder representations and standard modules}, 
Math. Z., December 2015, Volume 281, Issue 3, pp 1111-1127

\bibitem[H-W.93]{HW} A. Helminck, S. Wang, \textit{On rationality properties of involutions of reductive groups}, 
Adv. Math. 99 (1993), no. 1, 26-96. 

\bibitem[JL.85]{JL} H. Jacquet, K.F. Lai, \textit{A relative trace formula}. Compositio Math. 54 (1985), no. 2, 243-310.

\bibitem[J-L-R.99]{JLR} H. Jacquet, E. Lapid, J. Rogawsky, \textit{Periods of automorphic forms}. 
J. Amer. Math. Soc. 12 (1999), no. 1, 173-240.

\bibitem[L.08]{L} N. Lagier, \textit{Terme constant de fonctions sur un espace symétrique réductif $p$-adique}. J. Funct. Anal. 254 (2008), no. 4, 1088-1145.

%\bibitem[L.71]{L71} R. P. Langlands, \textit{Euler products}. A James K. Whittemore Lecture in Mathematics given at Yale University, 1967. Yale Mathematical Monographs, 1. Yale University Press, New Haven, Conn.-London, 1971. v+53 pp.

\bibitem[L-R.03]{LR} E. Lapid and J. Rogawsky, \textit{Periods of Eisenstein series: the Galois case}. 
Duke Math. J. 120 (2003), no. 1, 153-226.

\bibitem[M.11]{M11} N. Matringe, \textit{Distinguished generic representations of ${\rm GL}(n)$ over $p$-adic fields}.
 Int. Math. Res. Not. IMRN 2011, no. 1, 74-95.

\bibitem[M.14]{M14} N. Matringe, \textit{Unitary representations of ${\rm GL}(n,K)$ distinguished by a 
Galois involution for a $p$-adic field $K$}. Pacific J. Math. 271 (2014), no. 2, 445-460. 


\bibitem[P.01]{P01} D. Prasad, \textit{On a conjecture of Jacquet about distinguished representations of ${\rm GL}(n)$}. 
Duke Math. J. 109 (2001), no. 1, 67-78.

\bibitem[P.16]{P16} D. Prasad, \textit{A ``relative`` local Langlands correspondence}, http://arxiv.org/abs/1512.04347

\end{thebibliography}
\end{document}